\numberwithin{equation}{section}
\newcommand{\xspace}{{\mathcal{X}_{space}}}
\newcommand{\xobst}{{\mathcal{X}_{obst}}}
\newcommand{\xfree}{{\mathcal{X}_{free}}}
\newcommand{\xobddry}{{\partial\xobst}}
\newcommand{\uspace}{\mathcal{U}_{space}}
\newcommand{\F}[1]{\mathcal{F}_{#1}}
\newcommand{\R}[1]{\mathcal{R}_{#1}}
\newcommand{\B}[1]{\mathcal{B}_{#1}}
\newcommand{\clr}{\textsc{clr}}
\newcommand{\wit}{\textsc{wit}}
\newtheorem{theorem}{Theorem}[section]
\newtheorem{proposition}[theorem]{Proposition}
\newtheorem{lemma}[theorem]{Lemma}
\newtheorem{corollary}[theorem]{Corollary}
\newtheorem{ex}[theorem]{Example}
\newtheorem{remark}[theorem]{Remark}
\newtheorem{ass}[theorem]{Assumption}
\newtheorem{definition}[theorem]{Definition}
\crefname{equation}{}{}
\title[Discontinuities in Clearance]{Kinodynamic Control Systems and
Discontinuities in Clearance}
\author[N. Armstrong]{Niles Armstrong}
\email[A1]{armstrongn@msoe.edu}
\author[J. Denny]{Jory Denny}
\email[A2]{joryldenny@gmail.com}
\author[J. LeCrone]{Jeremy LeCrone}
\email[A3]{jlecrone@richmond.edu}
\keywords{Control theory, Minimal time function, Discontinuous value function}
\subjclass[2010]{
93C10, 
49N60, 
93B03  
}
\begin{document}
\maketitle

\begin{abstract}
We investigate the structure of discontinuities in clearance (or minimum time) functions 
for nonlinear control systems with general, closed obstacles (or targets). 
We establish general results regarding interactions between admissible trajectories 
and clearance discontinuities:
e.g. instantaneous increases in clearance when passing through a discontinuity, 
and propagation of discontinuity along optimal trajectories. 
Then, investigating sufficient conditions for discontinuities, 
we explore a common directionality condition for velocities 
at a point, characterized by strict positivity of the minimal Hamiltonian.
Elementary consequences of this common directionality assumption are explored
before demonstrating how, in concert with corresponding obstacle configurations,
it gives rise to clearance discontinuities both on the surface of the obstacle and
propagating out into free space.
Minimal assumptions are made on the topological structure of obstacle sets.
\end{abstract}

\section{Introduction}

This paper studies nonlinear optimal control problems in constrained
environments and fine properties of associated clearance functions
(or minimal time functions in time--optimal settings).
We focus on settings where discontinuities in clearance arise
and investigate the fine analytic structure of these sets of discontinuities
when systems admit a strict directionality of admissible velocities.

As motivation for this investigation, we note that optimal control is relevant 
to questions in robotics, mechanical engineering, and aerospace engineering. 
Specifically, algorithms that plan the motion of dynamical systems from a 
start state to a goal region (motion planning algorithms) may exploit 
many of the properties of discontinuities we explore herein.
As an example of one such class of algorithms, Rapidly--exploring Random 
Trees (RRTs) build approximate representations of a robot's state space that 
encode feasible pathways for a robot to move 
through regions of a constrained state space \cite{l-pa-06}.
These methods seek not only to find the existence of feasible paths, but paths 
with cost optimality~\cite{kf-sbaomp-11} (e.g., minimal time or shortest distance) 
and safety guarantees~\cite{dgta-marrt-14} to ensure reliable clearance from obstacles. 

Although clearance is traditionally interpreted as a robot's geometric 
distance from obstacles (with respect to canonical metrics in state space) 
we embrace a system--dependent perspective on clearance.
As observed in applications (c.f., \cite{sjp-osbmpudc-15}), the dynamic 
limitations of a robot's motion should also receive proper consideration 
in questions of optimality and safety of admissible trajectories.
Such considerations lead one naturally to a formulation of clearance that
ignores obstacles outside the accessible region of a robot 
(c.f., \Cref{Def:CLRetc}), which corresponds with 
the minimal time function (in time--optimal settings) studied extensively 
in the mathematical control theory community \cite{AF20,CMN15,CMW12,CS15,CN10,FN15,N16,NS07,V97,WZ98}.

The regularity of clearance / minimal--time functions is an active 
area of research, with a large selection of literature dedicated to 
sufficient conditions ensuring regularity conditions near obstacles.
As a small sampling of this literature, we point the interested reader to
a selection of references discussing differentiability \cite{CS15},
continuity and semicontinuity \cite{WZ98}, and semiconcavity \cite{CMW12}
of minimal--time functions.
A ubiquitous assumption in these studies is the controllability of 
the system in a neighborhood of the obstacle set, classically exemplified by 
the Petrov condition (c.f., \cite{CLSW95}). 
Essentially, this condition enforces the existence of admissible 
velocities at all obstacle boundary points so that trajectories 
reasonably penetrate the obstacle.
Our investigation diverges from those efforts, which we mention 
primarily to highlight the fact that we will be working in settings
devoid of such Petrov--type controllability conditions.

It is well--known that discontinuities arise in optimal control settings 
(c.f., \cite{BP87,BP07,CQS97,FPT11}), 
though there has been little attention paid to the precise 
analytic structure of these discontinuities in clearance.
This paper presents a framework for this analysis along with a number of 
important initial results to be built upon in future investigations.
Further, as motivated above, we note that this work can potentially 
inform future developments in robot motion planning.

We proceed with a brief outline of the paper.
In \Cref{sec:DefAss}, we introduce the setting, assumptions, 
and definitions for the investigation.
We observe that the optimal cost--distance produces a quasi-metric on state space
(a form of asymmetric distance between states) associated to which we
identify quasi-metric cost--balls centered at a given state.
The geometric properties of these evolving cost--balls plays an important 
role in the analysis of discontinuities in clearance.

In \Cref{sec:PosHam}, we isolate an assumption of directionality
among admissible velocities -- quantified by strict positivity of the 
minimal Hamiltonian \Cref{Eqn:MinimalHamiltonian} -- 
in the direction of some vector $\xi.$
We state and prove a number of elementary consequences of this
strict directionality, including a type of 
small--time--local--non--returnability (\Cref{lem:STLNR})
and existence of persistent boundary points 
(\Cref{thm:PersistentBddry}) 
on corresponding reachable sets.
Embodied in this persistent boundaries result is an investigation
of a local envelope of propagating reachable sets, a concept that is
central to the analysis of discontinuities of clearance in free space.

In \Cref{sec:CLRProps}, we establish intrinsic 
properties of clearance; i.e. properties of clearance 
as one traverses admissible trajectories. 
Notably, in \Cref{thm:ClrIncreaseAlongTrajectories}, we
confirm that clearance cannot discontinuously decrease along 
admissible trajectories.

Finally, in \Cref{sec:CLRDisc} we study properties of 
discontinuities in clearance.
First, we characterize all discontinuities in
free space (\Cref{thm:DiscontinuousOnE}) as envelope 
points on the boundaries of multiple members of a family of  
propagating reachable waves (\Cref{Def:WaveEnvelope}).
These envelope points are further shown 
to form a continuous structure in space (\Cref{thm:EAlongTrajectories}),
propagated along optimal trajectories back to the obstacle set.
Next, we turn our attention to clearance discontinuities  
present on the boundary of the obstacle set itself. 
After a brief exploration of general properties, we focus
specifically on a type of envelope generator discontinuity (\Cref{def:EnvGen}); 
from which free space discontinuities propagate with arbitrarily 
small clearance.

Our main result is \Cref{thm:EnvGenThm},
providing a set of sufficient conditions ensuring
an obstacle boundary point is in fact an envelope generator.
Thus ensuring the existence of free space discontinuities nearby.
This result is preceded by motivating examples and 
followed by applications of the result.

\section{Definitions and Assumptions}\label{sec:DefAss}

\subsection{General Setting}
Let $\xspace \subseteq \mathbb{R}^n$ be state space, 
with inner product denoted $\langle \cdot, \cdot \rangle,$ 
and let $\uspace \subseteq \mathbb{R}^m$ be control space.
We fix a state function $f:\xspace \times \uspace \to \xspace,$ 
with regularity conditions to be addressed below.
Given two states $x,y \in \xspace,$ we denote by $\Pi_{u}(x,y)$
the collection of absolutely continuous trajectories 
$\pi: [0,T] \to \xspace$ solving the {\bf parameterized} control problem
\begin{equation}\label{Eqn:MoveXtoY}
\begin{cases}
    \dot{\pi}(t) = f(x(t),u(t)) & \text{a.e. $t \in [0,T]$}\\
    u(t) \in \uspace & \text{a.e. $t \in [0,T]$}\\
    \pi(0) = x, \\ \pi(T) = y,
\end{cases}
\end{equation}
for some $T = T_\pi \ge 0.$
We say that $\pi \in \Pi_{u}(x,y)$ is an {\bf unconstrained} trajectory
moving $x$ to $y$ in $T_\pi$ units of time. 
Further, we fix a continuous running cost function $\psi: \xspace^2 \to (0,\infty),$
with which one computes the cost to traverse $\pi \in \Pi_u(\cdot,\cdot)$ as
\begin{equation}\label{Eqn:CostIntegral}
    c_\pi := \int_0^{T_\pi} \psi(\pi(t), \dot{\pi}(t)) dt.
\end{equation}

To address the regularity conditions for the state function, we shift our
discussion to the equivalent {\bf nonparameterized} control system.
Namely, we define the admissible velocity multifunction (i.e. set--valued function)
\begin{equation}\label{Eqn:VelFctn}
    F: \xspace \rightrightarrows \xspace \qquad \text{with} \qquad 
    F(x) := \{ f(x,u): u \in \uspace\}.
\end{equation} 
Under mild assumptions, it is well known that $\Pi_{u}(x,y)$ coincides with 
the absolutely continuous solutions to the differential inclusion
\begin{equation}\label{Eqn:MoveXtoYInclusion}
\begin{cases}
    \dot{\pi}(t) \in F(\pi(t)) &\text{for a.e. $t \in [0,T]$}\\
    \pi(0) = x, \\ \pi(T) = y.
\end{cases}
\end{equation}
We direct the interested reader to standard introductory texts in mathematical
control theory \cite{BP07, CLSW98} for further details on this and
other standard results.

Finally, we adopt the following standard assumptions on velocity sets,
which (albeit indirectly) address assumptions one can make for state functions $f.$
\begin{ass}[Standing Hypotheses for Velocity Sets]\label{ass:VelSetAssumptions} \mbox{ }
\begin{enumerate}
    \item[{\bf (SH1)}] {\it (Nonempty, closed, bounded, convex velocity sets)}
        For all $x \in \xspace,$ the set $F(x) \subset \xspace$ is nonempty and convex,
        with graph $gr F := \{(x,v): v \in F(x)\}$ closed in $\xspace^2$, and for each compact
        set $K \subset \xspace,$ there exists a constant $M > 0$ so that
        $\sup \{ \|v\|: x \in K, v \in F(x)\} \le M.$
    \item[{\bf (SH2)}] {\it (Linear growth condition)} There exist $\gamma, c > 0$ so that
    for all $x \in \xspace$ it holds that $\|v\| \le \gamma \|x\| + c$ whenever $v \in F(x).$
    \item[{\bf (SH3)}] {\it (Local Lipschitz regularity)} For each compact set $K \subset \xspace,$
        there exists $k > 0$ such that
        \[
            F(x) \subseteq F(y) + k\|x-y\|\overline{B_1(0)} \qquad \text{for all $x,y \in K,$}
        \]
        where $\overline{B_1(0)}$ denotes the closed unit ball centered at $0 \in \xspace.$
\end{enumerate}
\end{ass}

We also define the {\bf minimal (or lower) Hamiltonian} at a point $x \in \xspace,$ in
the direction of some vector $\xi \in \xspace,$ as
\begin{equation}\label{Eqn:MinimalHamiltonian}
    h_F(x,\xi) := \inf_{v \in F(x)} \langle v, \xi \rangle.
\end{equation}
Informally, we note that $h_F(x,\xi) > 0$ means all admissible velocities
at $x$ have a nontrivial positive component in the $\xi$ direction.

We introduce kinodynamic constraints on our control system by way of the 
following identification of obstacles in $\xspace.$ Let $\xobst \subset \xspace$ be any
closed {\bf obstacle} set and denote by $\xfree := \xspace \setminus \xobst$ the 
open {\bf free space} within whose closure trajectories are constrained.
In particular, with $\xobst$ and $\xfree$ fixed, we define the collection of
{\bf admissible trajectories} moving $x \in \xspace$ to $y \in \xspace$ as
\[
    \Pi(x,y) := \{ \pi \in \Pi_{u}(x,y): \pi(t) \in \overline{\xfree} \quad \text{for all $t \in [0,T_\pi]$} \}.
\]
Note that $\Pi(x,y) = \emptyset$ whenever $x$ or $y$ are in the interior of the obstacle
set, $\xobst^\circ.$

The following definitions lay the foundations for analysis in $\xspace,$ 
using the intrinsic system--dependent cost--distance between states.
We mention that our assumption of positive running cost (i.e. $\psi > 0$)
means that one could reformulate our problem to a simple minimal--time problem 
(i.e. with $\psi \equiv 1$ and $c_\pi \equiv T_\pi$) throughout (c.f. \cite[Remark 6.7]{BP07}).
We take advantage of this equivalence to support application of known results from the 
literature, but maintain a framework with general running--cost functions for 
accessibility of our results in applications.

\begin{definition}\label{Def:CLRetc} 
Given $x,y \in \xspace$ and $\rho > 0,$ we define the following:
\begin{enumerate}
    \item The control--system--dependent distance (or cost--distance) from $x$ to $y$
        \[
            d_c(x,y) := \begin{cases} 
            \inf\{c_\pi : \pi \in \Pi(x,y)\} & 
                \text{if $\Pi(x,y) \ne \emptyset$} \\
            \infty & \text{if $\Pi(x,y) = \emptyset.$}
            \end{cases}
        \]
    \item The forward reachable set of radius $\rho$
        \[
            \F{\rho}(x) := 
                \{ y \in \xspace: d_c(x,y) < \rho\}.
        \]
    \item The reverse reachable set of radius $\rho$
        \[
            \R{\rho}(x) := 
                \{ y \in \xspace: x \in \F{\rho}(y)\}.
        \]
    \item The clearance from $\xobst$
        \[
            \clr(x) := \inf_{y \in \xobst} d_c(x,y).
        \]
    \item The set of witness points
        \[
            \wit(x) := \{y \in \xobddry: d_c(x,y) = \clr(x)\}.
        \]
\end{enumerate}
\end{definition}

In general, the cost--distance function forms a quasi--metric on $\xspace.$
That is, $d_c(x,y) \ge 0$ with $d_c(x,y) = 0$ if and only if $x = y,$ and 
$d_c(x,y) \le d_c(x,z) + d_c(z,y)$ for all $x,y,z \in \xspace,$
but $d_c(x,y) \ne d_c(y,x)$ in general.
We introduce the quasi--metric {\bf cost balls,} 
\[
    \B{\rho}(x) := \F{\rho}(x) \cup \R{\rho}(x) \qquad 
    \text{for $x \in \xspace.$}
\]
We conclude this section by exploring the connections between the standard
metric induced by norm $\|\cdot\|_{\xspace}$ and the quasi--metric here introduced. 
To complement the quasi--metric ball around $x,$ we 
denote the standard metric ball of radius $r > 0$ centered at $x$ as
\[
    B_r(x) := \{ y \in \xspace: \|x-y\|_{\xspace} < r\}.
\]
(Throughout, we adopt the convention that roman letters $r, s, t$ denote
radii for metric balls, while Greek letters $\rho, \mu, \eta$ denote radii for cost balls.)


Now we list a number of elementary properties that are needed for future analysis.
The proofs for these facts follow from standard arguments, with 
references (or proof ideas) provided as appropriate.

\begin{proposition}[Properties of cost and clearance]\label{prop:CostProperties}
  Suppose $x, y \in \xspace$ 
  \begin{enumerate}
      \item \cite[Proposition 2.2(a)]{WZ98} The sets $\R{\rho}(x)$ and $\F{\rho}(x)$ 
      evolve continuously in $\rho,$ with respect to the Hausdorff distance.
  
      \item \cite[Proposition 2.4]{WZ98} If sequences $(x_n), (y_n) \subset \xspace$
      converge to $x$ and $y,$ respectively, and $\pi_n \in \Pi(x_n,y_n)$ exist with
      $c_{\pi_n} \to c,$ then there exists $\pi \in \Pi(x,y)$ with $c_{\pi} = c.$
      
      \item \cite[Proposition 2.6]{WZ98} If $\Pi(x,y) \ne \emptyset,$ then 
      there exists an optimal trajectory $\pi \in \Pi(x,y)$ with $c_\pi = d_c(x,y).$
      Moreover, if $\clr(x) < \infty,$ then $\wit(x) \ne \emptyset$ and, 
      for all $y \in \wit(x),$ there exists an optimal $\pi \in \Pi(x,y)$ 
      with $c_\pi = \clr(x).$
      
      \item \cite[Theorem 3.11]{CLSW95} The sets $\F{\rho}(x)$
      are locally Lipschitz continuous in $x,$ with respect to the Hausdorff distance.
      More precisely, we have that the forward attainable sets
      \[
        \mathcal{A}_{\rho}(x) := \{y \in \xspace: c_\pi = \rho
        \text{ for some } \pi \in \Pi(x,y)\}
      \]
      have local Lipschitz continuous dependence on $x.$
      
      \item Given $0<\rho<\mu,$ it holds that $\overline{\B{\rho}(x)} \subseteq \B{\mu}(x).$\\
      {\rm (This is a straightforward consequence of property (b) above)}
  \end{enumerate}
\end{proposition}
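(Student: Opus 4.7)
The plan is to unfold the definition $\overline{\B{\rho}(x)} = \overline{\F{\rho}(x)\cup \R{\rho}(x)}$ using a sequence and then split into the forward and reverse cases symmetrically, reducing everything to a single trajectory compactness argument via property (b).

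First I would fix $y \in \overline{\B{\rho}(x)}$ and choose a sequence $(y_n) \subset \B{\rho}(x)$ with $y_n \to y$. Since $\B{\rho}(x) = \F{\rho}(x) \cup \R{\rho}(x)$, after passing to a subsequence I may assume either (Case F) $y_n \in \F{\rho}(x)$ for all $n$, or (Case R) $y_n \in \R{\rho}(x)$ for all $n$. The two cases are handled identically after swapping the roles of $x$ and $y_n$ in the definition of the cost--distance, so I would focus on Case F.

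In Case F we have $d_c(x,y_n) < \rho < \infty$, so $\Pi(x,y_n) \ne \emptyset$. By property (c) there exist optimal admissible trajectories $\pi_n \in \Pi(x,y_n)$ with $c_{\pi_n} = d_c(x,y_n) < \rho$. The sequence $(c_{\pi_n})$ is bounded in $[0,\rho)$, so after passing to a further subsequence $c_{\pi_n} \to c$ for some $c \in [0,\rho]$. Now property (b), applied to the sequences $x_n \equiv x \to x$ and $y_n \to y$ with cost $c_{\pi_n} \to c$, yields an admissible trajectory $\pi \in \Pi(x,y)$ with $c_\pi = c$. Hence $d_c(x,y) \le c \le \rho < \mu$, i.e., $y \in \F{\mu}(x) \subseteq \B{\mu}(x)$. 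Case R is analogous, producing $y \in \R{\mu}(x) \subseteq \B{\mu}(x)$.

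I do not expect any substantial obstacle here; the statement is essentially just closedness of sublevel sets of $d_c$ in each of its arguments, lifted from property (b). The only mild subtlety is the case split between forward and reverse reachability, which is resolved by subsequencing so that (b) applies cleanly in each scenario. The strict inequality $\rho < \mu$ is precisely what converts the non--strict bound $c \le \rho$ from the limit argument into membership in the open cost ball of radius $\mu$.
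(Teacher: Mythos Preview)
Your argument is correct and is precisely the elaboration the paper has in mind: the paper does not give a detailed proof of (e) but simply notes it is a straightforward consequence of property (b), and your sequence-plus-compactness argument via (b) is exactly that. The only incidental difference is that you invoke (c) to obtain optimal $\pi_n$, whereas near-optimal trajectories with $c_{\pi_n} < \rho$ taken directly from the definition of $\F{\rho}(x)$ would suffice; either way the limiting cost satisfies $c \le \rho < \mu$ and the conclusion follows.
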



\section{Consequences of Positive Hamiltonian}\label{sec:PosHam}

As motivated in the introduction, a strict directionality of admissible velocities
is the primary driving force in our analysis of clearance discontinuities. 
We establish in this section some of the preliminary consequences of this assumption.

Throughout this section, we assume that $x, \xi \in \xspace$ are given with
$\xi \ne 0$ and satisfying the property
\begin{equation}\label{eqn:PosHam}
    h_F(x,\xi) := \inf_{v\in F(x)} \langle v, \xi \rangle > 0.
\end{equation}
Further, given any $r^\star > 0$ we define a point that is a geometric distance
of $r^\star$ away from $x$ in the direction of $\xi,$ namely
\[
    x^\star := x + \frac{r^\star}{\|\xi\|} \xi.
\]

\begin{proposition}\label{prop:HPosNhood}
Given $h_F(x,\xi) >0$ and $r^\star > 0,$ then there exists $R \in (0,r^\star)$ so that 
$h_F(y,x^\star - y) > \frac{1}{2}h_F(x,x^\star - x)$
for all $y \in \overline{B_R(x).}$
\end{proposition}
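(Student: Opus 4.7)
The plan is to leverage the local Lipschitz regularity (SH3) of the velocity multifunction $F$ together with the local boundedness (SH1), which between them control how the quantity $h_F(\cdot,\cdot)$ varies when \emph{both} the base point and the direction are perturbed. The key identity to keep in view is $x^\star - x = (r^\star/\|\xi\|)\xi$, so that $h_F(x, x^\star - x) = (r^\star/\|\xi\|)\, h_F(x,\xi)$ is a fixed positive number; call it $\alpha$.

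First I would fix the compact neighborhood $K := \overline{B_{r^\star}(x)}$ of $x$, extract from SH3 the corresponding Lipschitz constant $k$, and from SH1 the bound $M$ on $\|v\|$ for $v \in F(y)$ with $y \in K$. For any $y \in K$ and any $v \in F(y)$, symmetry of the Lipschitz inclusion (swap $x$ and $y$ in SH3) yields a $u \in F(x)$ with $\|v - u\| \le k\|x - y\|$. Next I would split
\[
\langle v, x^\star - y\rangle
= \langle u, x^\star - x\rangle + \langle v - u, x^\star - x\rangle + \langle v, x - y\rangle
\]
and apply Cauchy--Schwarz to the last two terms, using $\|x^\star - x\| = r^\star$ and $\|v\| \le M$. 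The first term is bounded below by $h_F(x, x^\star - x) = \alpha$, which gives the uniform estimate
\[
\langle v, x^\star - y\rangle \;\ge\; \alpha - (kr^\star + M)\|x - y\|.
\]
Taking the infimum over $v \in F(y)$ promotes this to a bound on $h_F(y, x^\star - y)$.

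Finally, I would choose $R \in (0, r^\star)$ small enough that $(kr^\star + M)R < \alpha/2$. Then for every $y \in \overline{B_R(x)}$, one has
\[
h_F(y, x^\star - y) \;\ge\; \alpha - (kr^\star + M)R \;>\; \tfrac{1}{2}\alpha \;=\; \tfrac{1}{2} h_F(x, x^\star - x),
\]
as required. There is no real obstacle in this argument; the only mild care needed is that the perturbation enters twice (both the base point $y$ changes and the target direction $x^\star - y$ changes with $y$), so one must handle both an SH3-type Lipschitz correction and an SH1-type magnitude bound simultaneously, which the splitting above accomplishes in one stroke.
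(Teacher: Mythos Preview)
Your proof is correct and follows essentially the same approach as the paper: both arguments fix a compact ball around $x$, invoke {\bf (SH1)} and {\bf (SH3)} to extract constants $M$ and $k$, approximate an arbitrary $v \in F(y)$ by some $u \in F(x)$, split $\langle v, x^\star - y\rangle$ into a leading term bounded below by $h_F(x, x^\star - x)$ plus Cauchy--Schwarz error terms of size $O(\|x-y\|)$, and then choose $R$ small enough to absorb the error. Your algebraic decomposition is organized slightly differently (you split the direction $x^\star - y = (x^\star - x) + (x - y)$ first and then approximate $v$ by $u$ on the leading piece, whereas the paper first writes $w = v_w + K\|x-y\|\phi_w$ and then splits), but the two are equivalent and your version is arguably a bit tighter; you also impose $R < r^\star$ directly rather than deducing it a posteriori.
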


\begin{proof}
This is a straightforward consequence of standing hypotheses {\bf (SH1)} and {\bf (SH3).}
We include details of the proof for the reader's convenience.
    
By compactness of $\overline{B_{2r^\star}(x)},$ fix values $M > 0$ and $K > 0$ so that
\begin{equation}\label{eqn:VelBound}
    y \in \overline{B_{2r^\star}(x)} \quad \text{and}
        \quad v \in F(y) \quad \Longrightarrow \quad
        \| v \| \le M,
\end{equation}
and
\begin{equation}\label{eqn:LipzBound}
    y,z \in \overline{B_{2r^\star}(x)} \quad   
        \Longrightarrow \quad
        F(y) \subset F(z) + K \|y-z\|\overline{B_1(0)}.
\end{equation}    
Select any $y \in \overline{B_{2r^\star}(x)}$ and $w \in F(y).$ 
It follows from \Cref{eqn:LipzBound} that
\[
    w = v_w + K\|x - y\| \phi_w,
        \qquad \text{for some} \qquad v_w \in F(x), \ \phi_w
        \in \overline{B_1(0)}.
\]
Employing Cauchy--Schwarz inequality, we compute
\begin{align*}
    \langle w, x^\star - &y \rangle = 
        \langle v_w, x^\star - x\rangle + 
        \langle v_w, x - y\rangle + 
        K\|x-y\|\langle \phi_w,x^\star-y\rangle\\
    &\ge h_F(x,x^\star-x) -
        |\langle v_w,x-y\rangle| -
        K\|x-y\|
        |\langle \phi_w, x^\star-y\rangle|\\
    &\ge h_F(x,x^\star-x) - 
        \|v\|\|x-y\| - K\|x-y\|\|\phi_w\|\|x^\star-y\|\\
    &\ge h_F(x,x^\star-x) - 
        \|x-y\|\big(M + 3r^\star K\big).
\end{align*}
The claim thus follows with any selection of 
$0 < R < h_F(x,x^\star - x) / 2\big(M + 3r^\star K\big).$
To see that $R < r^\star,$ note $h_F(x,x^\star - x) \to 0$
as $x \to x^\star,$ and so $x^\star \notin \overline{B_R(x)}.$
\end{proof}

\begin{proposition}\label{prop:DirectPropagation}
Given $h_F(x,\xi) > 0$ and $r^\star > 0,$ there exists $t^\star > 0$
so that
\[
    \frac{d}{dt} \|x^\star - \pi(t)\| \le - \frac{h_F(x,x^\star - x)}{r^\star}
\]
for all maximally--defined trajectories\footnote{By maximally--defined in the unconstrained setting, 
we simply
consider trajectories that have been extended (as necessary) to a maximal
time interval; i.e. so that $T_\pi = \infty.$}
$\pi \in \Pi_u(x,\cdot)$ and a.e. $t \in [0,t^\star].$ 
\end{proposition}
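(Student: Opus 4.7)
The plan is to combine the chain-rule identity for $t \mapsto \|x^\star - \pi(t)\|$ along an admissible trajectory $\pi$ with the uniform lower bound on the minimal Hamiltonian near $x$ supplied by \Cref{prop:HPosNhood}, then localize in time using the velocity bound from \textbf{(SH1)} so that $\pi(t)$ cannot escape the neighborhood where that Hamiltonian bound applies.

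Concretely, I would first differentiate. Since $\pi$ is absolutely continuous with $\dot\pi(t) \in F(\pi(t))$ almost everywhere, the chain rule and the definition \Cref{Eqn:MinimalHamiltonian} of $h_F$ give
\[
    \frac{d}{dt} \|x^\star - \pi(t)\|^2 = -2 \langle \dot\pi(t), x^\star - \pi(t) \rangle \le -2\, h_F(\pi(t), x^\star - \pi(t)).
\]
Provided $\pi(t) \ne x^\star$, dividing by $2\|x^\star - \pi(t)\|$ converts this into
\[
    \frac{d}{dt} \|x^\star - \pi(t)\| \le -\frac{h_F(\pi(t), x^\star - \pi(t))}{\|x^\star - \pi(t)\|},
\]
reducing the proposition to a uniform lower bound on the ratio on the right.

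To control that ratio I would localize. Let $M>0$ bound velocity norms on the compact set $\overline{B_{2r^\star}(x)}$ via \textbf{(SH1)} (exactly as in the proof of \Cref{prop:HPosNhood}), and let $R \in (0, r^\star)$ be the radius produced by \Cref{prop:HPosNhood}, so that $h_F(y, x^\star - y) > \tfrac{1}{2}\, h_F(x, x^\star - x)$ for every $y \in \overline{B_R(x)}$. Setting $t^\star := R/M$ and using $\|\pi(t) - x\| \le \int_0^t \|\dot\pi(s)\|\,ds \le Mt$ forces $\pi(t) \in \overline{B_R(x)}$ on $[0, t^\star]$ uniformly across all $\pi \in \Pi_u(x, \cdot)$. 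In particular $\pi(t) \ne x^\star$ there (since $R < r^\star$), so the working inequality is valid, the numerator is bounded below by $\tfrac{1}{2} h_F(x, x^\star - x)$, and the denominator is bounded above by $r^\star + R$.

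The delicate point, and the main obstacle, is recovering the sharp constant $-h_F(x, x^\star - x)/r^\star$ rather than the looser value a naive combination of the above estimates supplies (the factor $\tfrac{1}{2}$ in \Cref{prop:HPosNhood} and the inflated denominator $\|x^\star - \pi(t)\| \le r^\star + R$ each degrade the rate). To close this gap, I would revisit the quantitative estimate buried in the proof of \Cref{prop:HPosNhood} — which in fact yields $h_F(y, x^\star - y) \ge h_F(x, x^\star - x) - \|x - y\|(M + 3r^\star K)$ via \textbf{(SH3)} — and shrink $R$ (and hence $t^\star$) so that both $h_F(\pi(t), x^\star - \pi(t))$ lies arbitrarily close to $h_F(x, x^\star - x)$ and $\|x^\star - \pi(t)\|$ lies arbitrarily close to $r^\star$ on $[0, t^\star]$. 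This quantitative sharpening, which is essentially a continuity argument calibrated against $r^\star$, is where the real work of the proof takes place; the resulting $t^\star$ depends only on $x$, $x^\star$, $M$, and $K$, so uniformity over $\pi \in \Pi_u(x, \cdot)$ is automatic.
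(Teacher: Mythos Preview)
Your overall strategy---chain rule, then the Hamiltonian bound from \Cref{prop:HPosNhood}, then time-localization via the velocity bound from \textbf{(SH1)}---is exactly what the paper does. The only structural difference is that the paper differentiates $\|x^\star - \pi(t)\|^2$ rather than $\|x^\star - \pi(t)\|$: what its proof actually records is the inequality
\[
\frac{d}{dt}\|x^\star - \pi(t)\|^2 \le -2\,h_F(\pi(t), x^\star - \pi(t)) \le -h_F(x, x^\star - x),
\]
namely \Cref{eqn:PenetrationRate}, and it stops there without converting this to the bound on the unsquared norm displayed in the statement. In the downstream applications (e.g.\ \Cref{lem:UnifPenetration}) it is \Cref{eqn:PenetrationRate} itself that is invoked. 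So the discrepancy you detected between the ``naive'' bound and the stated constant $h_F(x, x^\star - x)/r^\star$ is real; the paper does not resolve it either.

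Your proposed fix, however, does not close that gap. The continuity argument you sketch only shows that the ratio $h_F(\pi(t), x^\star - \pi(t))\big/\|x^\star - \pi(t)\|$ can be made arbitrarily \emph{close} to $h_F(x, x^\star - x)/r^\star$ on $[0,t^\star]$; it does not show it is $\ge$ that value. At $t=0$ one has exact equality whenever $\dot\pi(0)$ realizes the infimum in $h_F(x,x^\star-x)$, so continuity alone cannot upgrade ``arbitrarily close'' to a non-strict lower bound on any positive time interval. Revisiting the quantitative estimate from the proof of \Cref{prop:HPosNhood} gives only $h_F(\pi(t), x^\star - \pi(t)) \ge h_F(x, x^\star - x) - C\|x-\pi(t)\|$, which again falls strictly short. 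In short: you correctly identified an issue with the exact constant, but your remedy does not resolve it---and neither does the paper's own proof, which simply records the squared-norm inequality \Cref{eqn:PenetrationRate} and proceeds.
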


\begin{proof}
Proceeding from the proof of \Cref{prop:HPosNhood},
we select any value 
\[
   0 < t^\star < \frac{R}{M}.
\]
Now, consider a maximally--defined trajectory 
$\pi \in \Pi_u(x,\cdot).$ Given $t \in [0,t^\star],$ first observe that
$\pi(t) \in B_R(x) \subset B_{2r^\star}(x)$ by \Cref{eqn:VelBound}, since
\[
    \|\pi(t)-x\|
        \le \int_0^{t} \|\dot{\pi}(s)\|ds
        \le M t < R < r^\star.
\]
Select an arbitrary vector $w \in F(\pi(t)).$
It follows from \Cref{eqn:LipzBound} that
\[
    w = v_w + K\|x - \pi(t)\| \phi_w,
        \qquad \text{for some} \qquad v_w \in F(x), \ \phi_w
        \in \overline{B_1(0)}.
\]
Thus, by \Cref{prop:HPosNhood}, we compute
\begin{equation}\label{eqn:PenetrationRate}
\begin{split}
    \frac{d}{dt}\|x^\star-\pi(t)\|^2 &= 
        \frac{d}{dt}\big\langle x^\star - \pi(t), x^\star - \pi(t) \big\rangle\\
    &\le -2h_F(\pi(t),x^\star-\pi(t)) \le -h_F(x,x^\star - x),
\end{split}
\end{equation}
noting the computation is valid for a.e. $t \in [0,t^\star],$ 
by absolute continuity of $\pi(\cdot).$
\end{proof}

The following two lemmas are consequences of the previous propositions.
The first result provides a quantitative statement for 
uniform directional propagation 
(\Cref{fig:RadiusOfPenetration} is a visualization of this phenomenon). 
The second lemma quantifies a type
of non--small--time--local--controllability
present whenever \Cref{eqn:PosHam} holds.

\begin{lemma}\label{lem:UnifPenetration}
Given $h_F(x, \xi) > 0$ and $r^\star > 0,$ set $t^\star > 0$ as in 
\Cref{prop:DirectPropagation}. Then, for every 
$t \in (0,t^\star],$ there exists $\eta^\star = \eta^\star(t) \in (0,1)$
so that
\[
    \pi(t) \in B_{\eta^\star r^\star}(x^\star)
\]
for all maximally--defined trajectories $\pi \in \Pi_u(x,\cdot).$
\end{lemma}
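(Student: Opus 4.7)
The plan is to derive this lemma as an essentially immediate corollary of \Cref{prop:DirectPropagation}: integrate the uniform derivative bound supplied there to get a linear-in-$t$ upper estimate on $\|x^\star - \pi(t)\|$ that sits strictly below $r^\star$ for every $t > 0$, then pick $\eta^\star(t)$ slightly above the resulting ratio.

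First, since $\pi \in \Pi_u(x,\cdot)$ is absolutely continuous, so is $t \mapsto \|x^\star - \pi(t)\|$ on $[0,t^\star]$, and \Cref{prop:DirectPropagation} gives
\[
\frac{d}{dt}\|x^\star - \pi(t)\| \;\le\; -c, \qquad c := \frac{h_F(x,x^\star - x)}{r^\star} > 0,
\]
for a.e.\ $t \in [0,t^\star]$. Integrating from $0$, and using $\|x^\star - \pi(0)\| = \|x^\star - x\| = r^\star$, produces the uniform (i.e., $\pi$-independent) bound
\[
\|x^\star - \pi(t)\| \;\le\; r^\star - c\,t \qquad \text{for all } t \in [0, t^\star].
\]
For fixed $t \in (0, t^\star]$, I would then set $\eta^\star(t) := 1 - ct/(2r^\star)$; this is immediately strictly less than $1$ (since $ct > 0$), and the above estimate sharpens to
\[
\|x^\star - \pi(t)\| \;\le\; r^\star - ct \;<\; r^\star - ct/2 \;=\; \eta^\star(t)\,r^\star,
\]
placing $\pi(t) \in B_{\eta^\star(t)\, r^\star}(x^\star)$, as required.

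The only step needing care is verifying $\eta^\star(t) > 0$, i.e., that $c\,t^\star < 2r^\star$. For this I would use the quantitative bounds built into the preceding propositions: $R < r^\star$ and $t^\star < R/M$, where $M$ is the velocity bound on $\overline{B_{2r^\star}(x)}$ from \textbf{(SH1)}. That same velocity bound, via Cauchy--Schwarz, yields $h_F(x,x^\star - x) \le M\,\|x^\star - x\| = M r^\star$, so
\[
c\,t^\star \;<\; \frac{M r^\star}{r^\star} \cdot \frac{r^\star}{M} \;=\; r^\star \;<\; 2r^\star,
\]
and therefore $\eta^\star(t) \in (0,1)$ for every $t \in (0, t^\star]$. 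I do not foresee any deeper obstacle: the substantive content of the lemma is already packaged in \Cref{prop:DirectPropagation}; what remains is merely a clean choice of $\eta^\star(t)$ together with the one-line sanity check above that its value stays in the open unit interval.
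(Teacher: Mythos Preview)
Your proof is correct and follows essentially the same approach as the paper: integrate the differential inequality from \Cref{prop:DirectPropagation} and choose $\eta^\star(t)$ just above the resulting ratio. The only cosmetic difference is that the paper integrates the bound on $\frac{d}{dt}\|x^\star - \pi(t)\|^2$ (from \cref{eqn:PenetrationRate} in the proof of \Cref{prop:DirectPropagation}) rather than the bound on $\frac{d}{dt}\|x^\star - \pi(t)\|$ as stated, and leaves the verification that $\eta^\star \in (0,1)$ implicit; your explicit check via $t^\star < R/M < r^\star/M$ and $h_F(x,x^\star-x) \le M r^\star$ is a nice addition.
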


\begin{proof}
Proceeding from \Cref{prop:DirectPropagation}, we compute
\[
    \|x^\star - \pi(t)\|^2 = (r^\star)^2 + 
        \int_0^{t} \frac{d}{ds} \|x^\star-\pi(s)\|^2 ds
        \le (r^\star)^2 - t h_F(x,x^\star-x).
\]
It follows that $\|x^\star - \pi(t)\| < \eta^\star r^\star = \eta^\star \|x^\star - x\|$
for any choice of parameter
\[
    1 > \eta^\star > \left(1-\frac{t h_F(x,x^\star-x)}{(r^\star)^2}\right)^{1/2}.  \qedhere
\]
\end{proof}

\begin{lemma}\label{lem:STLNR}
Suppose $x \in \xfree$ and $\xi \in \xspace$ with $h_F(x,\xi) > 0.$ 
Then there exists $\rho^\star = \rho^\star(x) > 0$ so that for all 
$\rho \in (0, \rho^\star),$ there exists $r(\rho) = r(\rho,x) > 0$ with 
\[
    \B{\rho^\star}(x) \setminus \B{\rho}(x) \subset \big(B_{r(\rho)}(x)\big)^c.
\]
Equivalently, if $y \in \xspace$ with $\rho \le \min\{ d_c(x,y), d_c(y,x)\} < \rho^\star,$
then $\|x - y\| \ge r(\rho) = r(\rho,x).$
\end{lemma}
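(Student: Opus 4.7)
The plan is to establish the local Lipschitz--type bound
\[
    \min\{d_c(x,y), d_c(y,x)\} \le C\|x-y\|
\]
for all $y \in \B{\rho^\star}(x)$ and a suitable constant $C = C(x) > 0.$
The claim of the lemma then follows by contraposition: choosing $r(\rho) := \rho/C$ forces $\|x-y\| \ge \rho/C$ whenever $\min\{d_c(x,y), d_c(y,x)\} \ge \rho.$

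My first step would be a careful localization.  Fix any $r^\star > 0,$ set $x^\star := x + (r^\star/\|\xi\|)\xi,$ and apply \Cref{prop:HPosNhood} to find $R \in (0,r^\star)$ with $h_F(z,x^\star - z) > \frac{1}{2}h^\star$ for all $z \in \overline{B_R(x)},$ writing $h^\star := h_F(x, x^\star - x) > 0.$  Then shrink $\rho^\star > 0$ until (i) $\B{\rho^\star}(x) \subset B_{R/2}(x),$ which is possible by the Hausdorff continuity of the reachable sets at $\rho = 0$ (\Cref{prop:CostProperties}(a)), and (ii) every optimal trajectory between $x$ and a point $y \in \B{\rho^\star}(x)$ remains inside $\overline{B_R(x)}$ throughout --- a consequence of the velocity bound from \Cref{ass:VelSetAssumptions} together with a positive lower bound $\psi \ge \psi_0$ and an upper bound $\psi \le \Psi$ available by compactness on the relevant region of $\xspace^2.$

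The main estimate proceeds as follows.  Given $y \in \F{\rho^\star}(x),$ let $\pi \in \Pi(x,y)$ be an optimal trajectory (\Cref{prop:CostProperties}(c)).  Because $\pi([0,T_\pi]) \subset \overline{B_R(x)},$ the computation of \Cref{eqn:PenetrationRate} from the proof of \Cref{prop:DirectPropagation} is valid throughout $[0,T_\pi],$ giving $\frac{d}{dt}\|x^\star - \pi(t)\|^2 \le -h^\star$ for a.e.\ $t.$  Integrating and factoring through the triangle inequality yields
\[
    h^\star T_\pi
    \le (r^\star)^2 - \|x^\star - y\|^2
    = (r^\star - \|x^\star - y\|)(r^\star + \|x^\star - y\|)
    \le 2r^\star\|x-y\|,
\]
so $d_c(x,y) = c_\pi \le \Psi T_\pi \le (2r^\star\Psi/h^\star)\|x-y\|.$  An entirely symmetric argument, applied to an optimal $\pi \in \Pi(y,x)$ when $y \in \R{\rho^\star}(x),$ produces $d_c(y,x) \le ((2r^\star + R)\Psi/h^\star)\|x-y\|,$ the only change being that the triangle factoring now yields $\|x^\star - y\|^2 - (r^\star)^2 \le (2r^\star + \|x-y\|)\|x-y\| \le (2r^\star + R)\|x-y\|.$  Taking $C$ to be the larger of these two constants and setting $r(\rho) := \rho/C$ closes the argument.

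I expect part (ii) of the localization to be the main obstacle: from only a bound on $c_\pi,$ one must rule out any wide excursion of the optimal trajectory outside $\overline{B_R(x)}.$  This should close with a standard continuity bootstrap --- so long as $\pi$ remains in $\overline{B_{2R}(x)}$ its speed is bounded by some $M > 0,$ so its spatial drift from $x$ is bounded by $Mt,$ while $t \le T_\pi \le c_\pi/\psi_0 < \rho^\star/\psi_0$ --- provided $\rho^\star$ is taken small relative to $R,$ $\psi_0,$ and $M.$
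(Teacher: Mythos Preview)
Your argument is correct. It differs from the paper's in how the underlying decay estimate is converted into the conclusion. The paper invokes \Cref{lem:UnifPenetration} directly: after fixing $\rho^\star$ with $\B{\rho^\star}(x)\subset B_R(x)$, it sets $\eta^\star=\eta^\star\big(\min\{t^\star,\rho/\psi^\star\}\big)$ and $r(\rho)=r^\star(1-\eta^\star)$, observes that any trajectory with cost $c_\pi\ge\rho$ has duration $T_\pi\ge\rho/\psi^\star$, and then reads off from \Cref{lem:UnifPenetration} that the far endpoint lies in $B_{\eta^\star r^\star}(x^\star)\subset(B_{r(\rho)}(x))^c$. You instead integrate $\frac{d}{dt}\|x^\star-\pi(t)\|^2\le -h^\star$ over the entire optimal trajectory, factor the resulting difference of squares through the reverse triangle inequality, and extract a genuine local Lipschitz bound $\min\{d_c(x,y),d_c(y,x)\}\le C\|x-y\|$; contraposition with $r(\rho)=\rho/C$ then finishes. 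Your route yields the explicit linear dependence of $r(\rho)$ on $\rho$, while the paper's $r(\rho)$ is expressed through the implicitly defined penetration factor $\eta^\star$.

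One simplification: your localization step (ii) is in fact automatic from (i), so the bootstrap in your final paragraph is unnecessary. If $\pi\in\Pi(x,y)$ is optimal with $c_\pi<\rho^\star$, then for every $t\in[0,T_\pi]$ the sub-trajectory $\pi|_{[0,t]}$ has cost below $\rho^\star$, whence $\pi(t)\in\F{\rho^\star}(x)\subset\B{\rho^\star}(x)\subset B_{R/2}(x)$; the reverse case is symmetric. Hence the trajectory stays in $\overline{B_R(x)}$ throughout, and the decay estimate \Cref{eqn:PenetrationRate} is available on all of $[0,T_\pi]$ as you require.
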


\begin{proof}
By $\xfree$ open, we select $r^\star > 0$ so that $B_{r^\star}(x) \subset \xfree.$ 
Now, we fix $R > 0$ as in \Cref{prop:HPosNhood},
$t^\star > 0$ as in \Cref{prop:DirectPropagation},
and select $\rho^\star = \rho^\star(x) > 0$ sufficiently small that 
$\B{\rho^\star}(x) \subset B_R(x).$
By compactness of $\overline{B_R(x)},$ standing hypothesis {\bf (SH1)}, 
and continuity of $\psi,$ we define
\begin{equation}\label{eqn:PsiStar}
    \psi^\star := \max\{ \psi(y,v): y \in \overline{B_R(x)}, v \in F(y)\}.
\end{equation}

Now, let $\rho \in (0,\rho^\star),$ apply \Cref{lem:UnifPenetration} to set 
\[
    \eta^\star = \eta^\star\big(\min\left\{t^\star, \rho / \psi^\star \right\}\big)
\]
and define $r(\rho) = r^\star - \eta^\star r^\star.$

Consider any trajectory $\pi \in \Pi_u(x,\cdot) \cup \Pi_u(\cdot,x)$ with 
$\rho \le c_\pi < \rho^\star.$
We note that $\B{\rho^\star}(x) \subset \xfree,$ so all such unconstrained
trajectories are likewise admissible (constrained) trajectories.
It follows from \Cref{Eqn:CostIntegral} that $T_\pi \ge \frac{\rho}{\psi^\star},$ and 
we conclude the proof with the observation that  
\[
    \pi(T_\pi) \in B_{\eta^\star r^\star}(x^\star) \subset \big(B_{r(\rho)}(x)\big)^c. \qedhere
\] 
\end{proof}

\begin{figure}[tbhp]
\centering
\subfloat[\Cref{lem:UnifPenetration}]{
\includegraphics[clip=true,trim=0.4in 1in 0.4in 2in,height=2.5in]{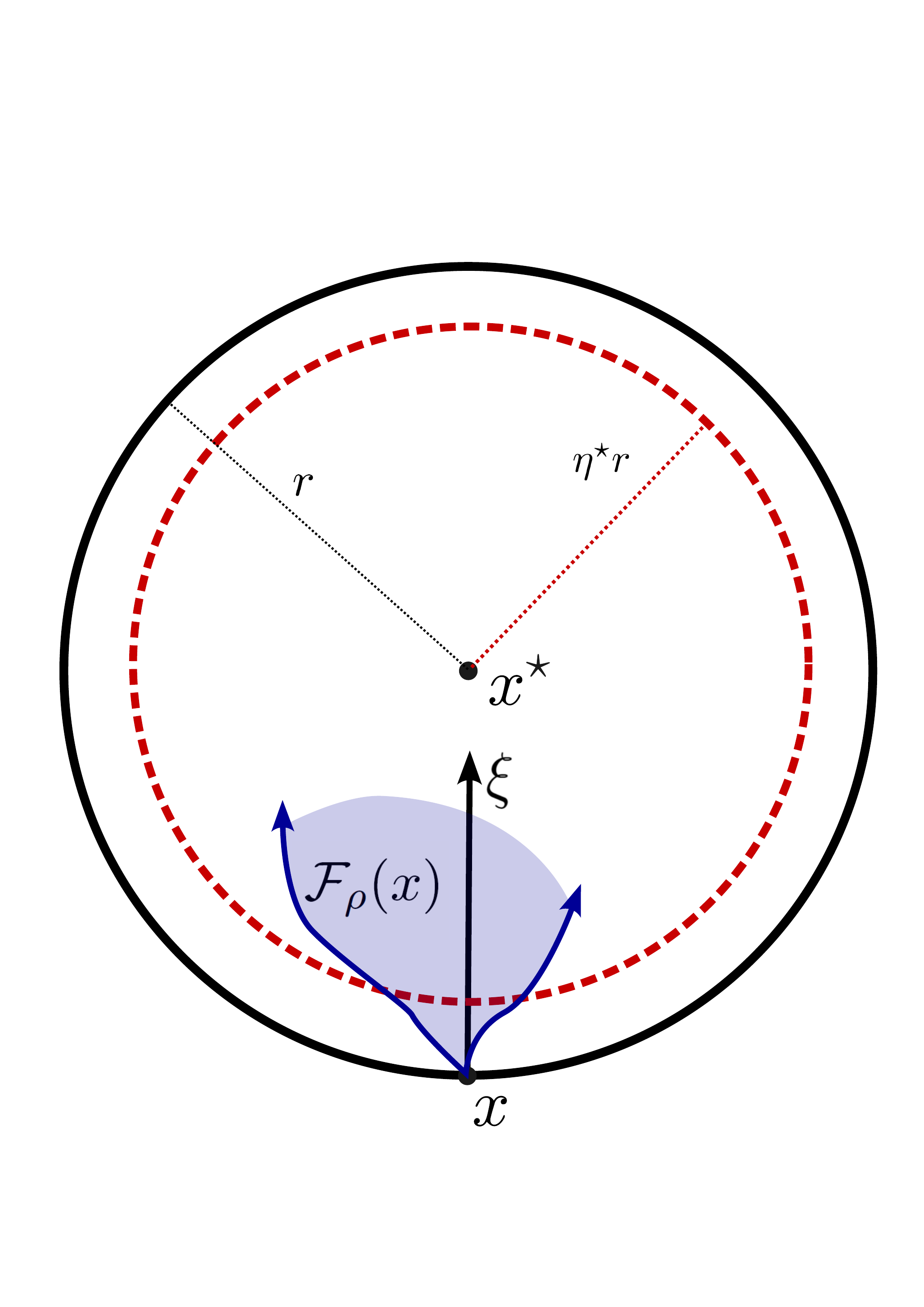}\label{fig:RadiusOfPenetration}}
\hspace{1cm}
\subfloat[\Cref{thm:PersistentBddry}]{
\includegraphics[clip=true,trim=.75in 1in 1in 2in,height=2.5in]{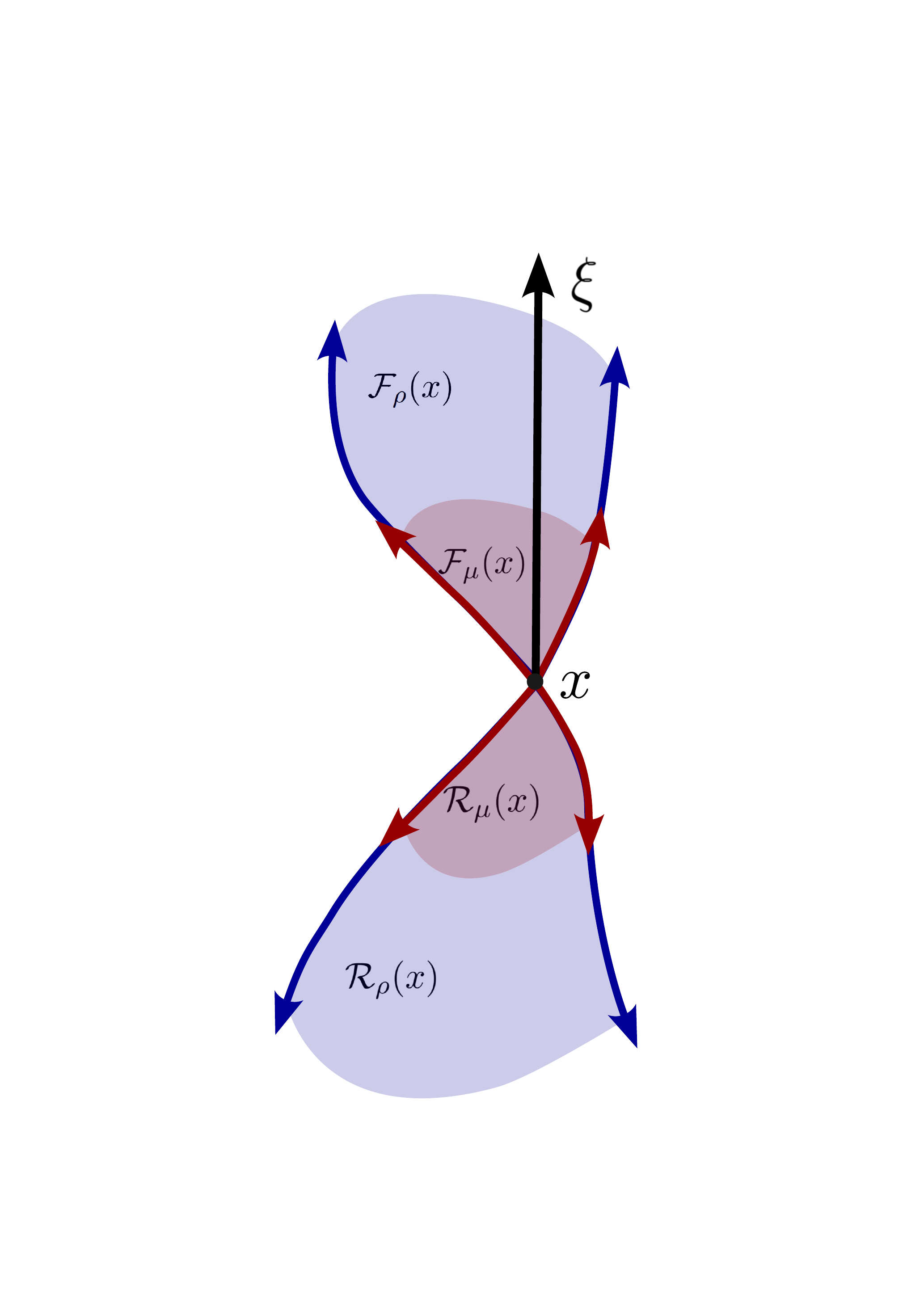}\label{fig:PersistentBoundary}}
\caption{(a) Visualizing \Cref{lem:UnifPenetration},
quantifying the uniform directional propagation of trajectories 
leaving a point with strictly positive minimal Hamiltonian.
(b) Visualizing \Cref{thm:PersistentBddry} (below). 
In the example displayed, {\bf all} boundary points of the smaller reachable
sets $\R{\mu}(x)$ and $\F{\mu}(x)$ persist as boundary points
of the larger sets $\R{\rho}(x)$ and $\F{\rho}(x).$}
\end{figure}

In the next result, we demonstrate how \Cref{eqn:PosHam}
gives rise to the existence of persistent boundary points on 
reachable sets (\Cref{fig:PersistentBoundary}).
The presence of such persistent boundary points, within 
families of propagating sets, plays a key role
in the analysis of clearance discontinuities in \Cref{sec:CLRDisc} below.

\begin{theorem}\label{thm:PersistentBddry}
Suppose $x \in \xfree$ and $\xi \in \xspace$ with $h_F(x,\xi) > 0.$
For all $r > 0$ there exists $\rho > 0$ so that
\[
    \big(\partial\R{\rho}(x) \cap \partial\R{\mu}(x) \cap B_r(x)\big) \setminus \{x\} \ne \emptyset 
    	\qquad \text{for all $0 < \mu < \rho,$}
\]
and
\[
    \big(\partial\F{\rho}(x) \cap \partial\F{\mu}(x) \cap B_r(x)\big) \setminus \{x\} \ne \emptyset 
    	\qquad \text{for all $0 < \mu < \rho.$}
\]
\end{theorem}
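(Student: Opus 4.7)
The plan is to reduce both assertions to the $\F{\rho}(x)$ case via time reversal: the system with velocity multifunction $-F$ preserves all standing hypotheses of \Cref{ass:VelSetAssumptions}, satisfies $h_{-F}(x,-\xi)=h_F(x,\xi)>0$, and its forward reachable sets agree with the reverse reachable sets of the original system. The geometric picture motivating the construction (consistent with \Cref{fig:PersistentBoundary}) is that strict positivity of the minimal Hamiltonian forces $\F{\rho}(x)$ to be cone-like near $x$ with apex $x$ and axis along $\xi$; persistent boundary points then live on the lateral surface of this cone, where transverse perturbations exit the reach of all trajectories from $x$ of cost at most $\rho$.

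Given $r>0$, first I select $r^\star\in(0,r)$ with $\overline{B_{r^\star}(x)}\subset\xfree$ and use \Cref{lem:STLNR} together with the Hausdorff continuity of $\F{\cdot}(x)$ at $\rho=0$ (from \Cref{prop:CostProperties}(a)) to choose $\rho\in(0,\rho^\star(x))$ small enough that $\overline{\F{\rho}(x)}\subset B_r(x)\cap\xfree$. Fix a unit vector $\zeta\in\xspace$ transverse to $\xi$, and let $y_\rho$ be a maximizer of the linear functional $\langle\,\cdot\,-x,\zeta\rangle$ over the compact set $\overline{\F{\rho}(x)}$; such a maximizer exists by compactness, lies in $\partial\F{\rho}(x)$, and can be chosen distinct from $x$ using the lateral spreading guaranteed by \Cref{lem:UnifPenetration}. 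By \Cref{prop:CostProperties}(c) I fix an optimal trajectory $\pi\colon[0,T]\to\overline{\xfree}$ from $x$ to $y_\rho$ with $c_\pi=d_c(x,y_\rho)=\rho$, and for each $\mu\in(0,\rho)$ set $z_\mu:=\pi(\tau_\mu)$, where $\tau_\mu\in(0,T)$ is the unique instant at which $\pi$ has accumulated cost exactly $\mu$. Then $z_\mu\in B_r(x)\setminus\{x\}$, and the standard dynamic-programming identity $d_c(x,z_\mu)=\mu$ (any strict improvement concatenated with $\pi|_{[\tau_\mu,T]}$ would yield a trajectory to $y_\rho$ of cost strictly less than $\rho$, contradicting optimality of $\pi$) places $z_\mu$ in $\overline{\F{\mu}(x)}\setminus\F{\mu}(x)\subset\partial\F{\mu}(x)$, where the closure membership follows from approximating $z_\mu$ along $\pi(\tau_n)$ for $\tau_n\uparrow\tau_\mu$.

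The main obstacle is showing $z_\mu\in\partial\F{\rho}(x)$. Because $d_c(x,z_\mu)=\mu<\rho$ automatically places $z_\mu\in\F{\rho}(x)$, the topological boundary condition reduces to producing a sequence $(w_n)\to z_\mu$ with $w_n\notin\F{\rho}(x)$, i.e., $d_c(x,w_n)\ge\rho$. My plan is to take $w_n:=z_\mu+(1/n)\zeta$ and prove $d_c(x,w_n)>\rho$ for all large $n$, combining three ingredients: the extremality of $y_\rho$ in direction $\zeta$ over $\overline{\F{\rho}(x)}$, the inherited strict positivity $h_F(y,\xi)>0$ for $y$ in a neighborhood of $x$ (by Lipschitz dependence of $F$, analogous to \Cref{prop:HPosNhood}), and a propagation estimate of the flavor of \Cref{prop:DirectPropagation} recentered at points near $z_\mu$. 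The intuition is that any trajectory from $x$ reaching $w_n$ must either track $\pi$ closely and then sacrifice additional cost to drift laterally by $1/n$ in direction $\zeta$, or follow a qualitatively different path; in the first case, a quantitative lower bound on the lateral-drift cost (controlled by the inherited positive Hamiltonian) forces total cost $>\rho$, while in the second case, the extremality of $y_\rho$ rules out any low-cost alternative reaching further than $y_\rho$ in direction $\zeta$. Rigorously executing this dichotomy — translating the extremality encoded in $y_\rho$ into a uniform quantitative lower bound on $d_c(x,z_\mu+\delta\zeta)-\rho$ for small $\delta>0$ — is the technical heart of the proof and the step I expect to require the most care; the Lipschitz continuity of the forward attainable sets on the basepoint (\Cref{prop:CostProperties}(d)) provides the final bridge from the trajectorial estimates to the required sequence $(w_n)\subset\F{\rho}(x)^c$.
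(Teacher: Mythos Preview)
Your approach has a genuine gap at the step you yourself flag as the technical heart: establishing that $z_\mu\in\partial\F{\rho}(x)$. The extremality of $y_\rho$ in the direction $\zeta$ over $\overline{\F{\rho}(x)}$ gives information only about $y_\rho$, not about the intermediate point $z_\mu$ along the optimal trajectory. There is no reason the optimal trajectory to a $\zeta$-extremal endpoint should pass through $\zeta$-extremal intermediate points; if, for example, $F$ is state-dependent so that the admissible cone widens as one moves forward in the $\xi$ direction, the optimal route to $y_\rho$ can run along the central axis for a while before veering laterally, placing $z_\mu$ deep in the interior of $\F{\rho}(x)$. In that situation $w_n=z_\mu+(1/n)\zeta$ lies in $\F{\rho}(x)$ for all large $n$, and your sequence fails. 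Your proposed dichotomy does not close this: the positive Hamiltonian controls motion in the $\xi$ direction, not lateral drift in $\zeta$, so there is no lower bound on ``lateral-drift cost''; and since $\langle z_\mu-x,\zeta\rangle$ may be far below $\langle y_\rho-x,\zeta\rangle$, the extremality of $y_\rho$ does not exclude low-cost trajectories reaching $w_n$. Invoking \Cref{prop:CostProperties}(d) does not help either, as that result concerns Lipschitz dependence on the \emph{basepoint}, which is fixed here.

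The paper's proof avoids this difficulty by not constructing a specific candidate at all. Instead it argues topologically: for suitably small $\rho$, the deleted neighborhood $\mathcal{N}=B_{\tilde r}(x)\setminus\{x\}$ is connected and meets both $\R{\rho}(x)$ and its complement (the latter because, by the monotone-distance estimate \cref{Eqn:ReverseFromYStar}, $\R{\rho}(x)\setminus\{x\}$ misses $B_{r^\star}(x^\star)$), so $\mathcal{N}\cap\partial\R{\rho}(x)\ne\emptyset$. The key step is then the \emph{converse} direction: any $z\in\partial\R{\rho}(x)$ with $\|z-x\|<r(\mu)$ is forced by \Cref{lem:STLNR} to satisfy $d_c(z,x)<\mu$, hence $z\in\R{\mu}(x)$; since points near $z$ lie outside $\R{\rho}(x)\supseteq\R{\mu}(x)$, one gets $z\in\partial\R{\mu}(x)$ for free. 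So the persistent boundary point is found by first landing on $\partial\R{\rho}(x)$ near $x$ and then deducing membership in $\partial\R{\mu}(x)$, rather than the reverse order you attempt.
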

\begin{proof}
Given $r > 0,$ choose $0 < r^\star \le r$ so that $B_{r^\star}(x) \subset \xfree$ 
and set $x^\star := x + \frac{r^\star}{\|\xi\|}\xi.$
Applying \Cref{prop:HPosNhood,lem:STLNR}, we fix $R >0$ and $\rho^\star(x) > 0$.
Choose any $\rho \in (0,\rho^\star(x))$ sufficiently small that 
$\B{\rho}(x) \subseteq B_R(x).$
    
We claim that 
\begin{equation}\label{Eqn:ReverseFromYStar}
    \R{\rho}(x) \setminus \{x\} \subset \big(B_{r^\star}(x^\star)\big)^c.
\end{equation}
Indeed, given $y \in \R{\rho}(x)$ and $\pi \in \Pi(y,x)$ with $c_{\pi} < \rho,$
consider any $\tau \in [0,T_\pi].$ Since $\pi(\tau) \in B_R(x),$ we know that
$h_F(\pi(\tau),x^\star-\pi(\tau)) > 0.$ 
It follows from \Cref{prop:DirectPropagation}
that $\|x^\star - \pi(\tau+t)\|$ is strictly decreasing 
on some interval $t \in [0,t^\star(\tau)].$ By compactness of the interval $[0,T_\pi],$
we conclude $\|x^\star - \pi(t)\|$ is strictly decreasing from $y$ to $x.$
Therefore, we have $\|x^\star - y\| > \|x^\star -x\| = r^\star.$
    
Let $0 < \mu < \rho.$ Fix $r(\mu) = r(\mu,x)$ as in
\Cref{lem:STLNR} and then select 
\[
    \tilde{r} := \min\{r,r(\mu)\}.
\]
Consider the deleted neighborhood 
$\mathcal{N} := B_{\tilde{r}}(x) \setminus \{x\}.$ 
Note that $\mathcal{N} \cap \R{\rho}(x) \ne \emptyset,$ while 
$\mathcal{N} \cap \big(\R{\rho}(x)\big)^c \ne \emptyset$ follows from
\Cref{Eqn:ReverseFromYStar}. 
Since $\mathcal{N}$ is a connected set, we conclude
\[
    \mathcal{N} \cap \partial \R{\rho}(x) \ne \emptyset.
\]
    
Let $z \in \mathcal{N} \cap \partial \R{\rho}(x).$ 
\Cref{prop:CostProperties}(e) and $\rho < \rho^\star(x)$ 
imply that $d_c(z,x) < \rho^\star(x).$
Moreover, since $\|z-x\| < \tilde{r} \le r(\mu)$, we 
conclude $d_c(z,x) < \mu,$ from \Cref{lem:STLNR}.
Therefore, we have that $z \in \R{\mu}(x) \cap \partial \R{\rho}(x)$
which implies
\[
    z \in \partial \R{\mu}(x) \cap \partial \R{\rho}(x) \cap \mathcal{N} \subseteq 
    \big(\partial\R{\rho}(x) \cap \partial\R{\mu}(x) \cap B_r(x)\big) \setminus \{x\}.
\]    
A symmetric argument proves the result for forward 
reachable sets. Therein, 
\cref{Eqn:ReverseFromYStar} is replaced by
$\F{\rho}(x)\setminus\{x\} \subset B_{r^\star}(x^\star),$
which follows directly from \Cref{prop:DirectPropagation}.
\end{proof}


\section{Intrinsic Properties of Clearance}\label{sec:CLRProps}

Having established preliminary results regarding cost balls,
we turn now to expand on the behavior of the clearance function $\clr.$ 
In particular, we focus on properties of $\clr$ observable as one traverses 
along admissible trajectories (i.e. an intrinsic perspective to objects 
moving in the system).

\begin{proposition}\label{prop:ClearanceDistanceBound}
For all $x, z \in \xspace,$ we have $\clr(x) \le \clr(z) + d_c(x,z).$
\end{proposition}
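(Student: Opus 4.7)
The proposition is essentially a triangle inequality for clearance relative to the quasi-metric $d_c$, so the plan is to exploit the subadditivity of $d_c$ together with the definition of clearance as an infimum.

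First I would dispose of the trivial case: if $d_c(x,z) = \infty$ or $\clr(z) = \infty$, the right--hand side is $\infty$ and the inequality holds automatically. So assume both quantities are finite.

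Next, I would fix an arbitrary $\varepsilon > 0$ and use the definition of $\clr(z)$ as an infimum over $\xobst$ to select a point $y \in \xobst$ with
\[
    d_c(z,y) < \clr(z) + \varepsilon.
\]
In particular $\Pi(z,y) \ne \emptyset$, and I can choose $\pi_1 \in \Pi(z,y)$ with $c_{\pi_1} < \clr(z) + \varepsilon$. Similarly, from $d_c(x,z) < \infty$ I can select $\pi_2 \in \Pi(x,z)$ with $c_{\pi_2} < d_c(x,z) + \varepsilon$.

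The key step is to concatenate $\pi_2$ (from $x$ to $z$) with $\pi_1$ (from $z$ to $y$), yielding an absolutely continuous trajectory $\pi \in \Pi(x,y)$ whose time domain is $[0, T_{\pi_2} + T_{\pi_1}]$. Since both component trajectories remain in $\overline{\xfree}$ and agree at the junction $z$, the concatenation is admissible, and by additivity of the integral in \Cref{Eqn:CostIntegral} its cost satisfies
\[
    c_\pi = c_{\pi_2} + c_{\pi_1} < d_c(x,z) + \clr(z) + 2\varepsilon.
\]
Thus $d_c(x,y) \le c_\pi < d_c(x,z) + \clr(z) + 2\varepsilon$, and since $y \in \xobst$ the definition of clearance gives
\[
    \clr(x) \le d_c(x,y) < d_c(x,z) + \clr(z) + 2\varepsilon.
\]
Letting $\varepsilon \to 0^+$ concludes the proof.

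The one point requiring a moment of care is verifying that the concatenated curve really is an admissible trajectory in the sense of \Cref{Eqn:MoveXtoY}--\Cref{Eqn:MoveXtoYInclusion}: absolute continuity is preserved across a single junction, the velocity inclusion holds a.e. on each piece, and admissibility ($\pi(t) \in \overline{\xfree}$) is inherited from the two pieces. This is standard, so the argument is almost entirely a bookkeeping exercise; the main ``obstacle'' is really just remembering to split the $\varepsilon$ between the two infimum approximations rather than attempting to invoke an unavailable witness/optimal--trajectory result when $\clr(z)$ is merely finite.
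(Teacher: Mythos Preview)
Your proof is correct and follows essentially the same concatenation argument as the paper's. The only difference is that the paper invokes \Cref{prop:CostProperties}(c) to use exact optimal trajectories and a witness point directly (these \emph{are} available whenever $\clr(z) < \infty$ and $d_c(x,z) < \infty$, contrary to your closing remark), whereas you work with $\varepsilon$-approximations; both versions are equally valid.
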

\begin{proof}
This is a straightforward result from an extended trajectory. 
Namely, suppose we have $\pi \in \Pi(x,z), \ y \in \wit(z),$ and $\pi' \in \Pi(z,y)$
so that $c_\pi = d_c(x,z)$ and $c_{\pi'} = \clr(z).$ 
It follows that the trajectory
\[
    \hat{\pi}(t) := \begin{cases} \pi(t) & \text{for $t \in [0,T_\pi]$}\\
            \pi'(t-T_\pi) & \text{for $t \in [T_\pi, T_\pi + T_{\pi'}]$}
            \end{cases}
\]
is an element of $\Pi(x,y).$
Thus, we compute
\[
    \clr(x) \le c_{\hat{\pi}} = c_{\pi'} + c_\pi = \clr(z) + 
        d_c(x,z). \qedhere
\]
\end{proof}

\begin{remark}\label{rem:NotLips}
Extending \Cref{prop:ClearanceDistanceBound},
we observe that 
\begin{equation}\label{eqn:Corollary}
    \clr(x) - \clr(z) \le d_c(x,z) \qquad \text{whenever} 
    \qquad \clr(z) < \infty.
\end{equation}
Intuition from geometric settings may lead one to 
expect the difference $\clr(z) - \clr(x)$ to also be bounded
above by $d_c(x,z).$
We demonstrate the fallacy of this attempt with the following example.
\end{remark}

\begin{ex}[Galaga System\footnote{Inspired by the classic
1981 arcade game of the same name and its subsequent
incarnations.}]\label{ex:GalagaIntro}
Working in $\xspace = \mathbb{R}^2,$ we consider the system
\begin{equation}\label{eqn:GalagaSystem}
    \begin{cases} \dot{x}_1 = u \\ \dot{x}_2 = 1 \end{cases}
    \qquad \text{for} \qquad u \in \uspace := [-1,1].
\end{equation}
For simplicity, we set $\psi \equiv 1,$ so that $c_\pi = T_\pi$ 
for all trajectories.
We constrain $\xfree$ to be a vertical 
passage opening into a wider
passage at $x_2 = 0.$ Precisely, we set
\begin{align*}
    \xfree &:= (-1,2) \times (-\infty,0] \ \bigcup \ 
    (-5,2) \times (0,\infty) \qquad \text{and} \\
    \xobst &:= \xspace\setminus \xfree.
\end{align*}
Consider $x = \left(-\frac{1}{2},-1\right)$
and $z = \left(-\frac{1}{2},0\right).$
Then, one computes (c.f.
\Cref{fig:GalagaAbruptPassage})
\begin{itemize}
    \item $d_c(x,z) = 1,$
        realized by 
        $\pi(t) = \left(-\frac{1}{2},-1+t\right)$
        with controls $u(t) \equiv 0,$
    \item $\clr(x) = 1/2,$ 
        with $\wit(x) = \left\{\left(-1,-\frac{1}{2}\right)\right\},$ and
    \item $\clr(z) = 5/2,$ 
        with $\wit(z) = \left\{\left(2,\frac{5}{2}\right)\right\}$.
\end{itemize}
Thus confirming the discussion in \Cref{rem:NotLips}
above, observing here that we have
\begin{equation}\label{eqn:NotLipschitz}
\clr(z) - \clr(x) > d_c(x,z).
\end{equation}
We also note that a discontinuous jump in $\clr$ occurs
in \Cref{fig:GalagaAbruptPassage}
when the trajectory $\pi$ passes through the point
$\left(-\frac{1}{2},-\frac{1}{2}\right).$ In fact,
we confirm in 
\Cref{cor:JumpDiscontinuousOnTrajectories}
that this is the only type of discontinuity 
that may occur along admissible trajectories.

\begin{figure}[tbhp]
\centering
\subfloat[Corner Passage]{
\includegraphics[clip=true,trim=0in 2.5in 0in 0in,height=2.25in]{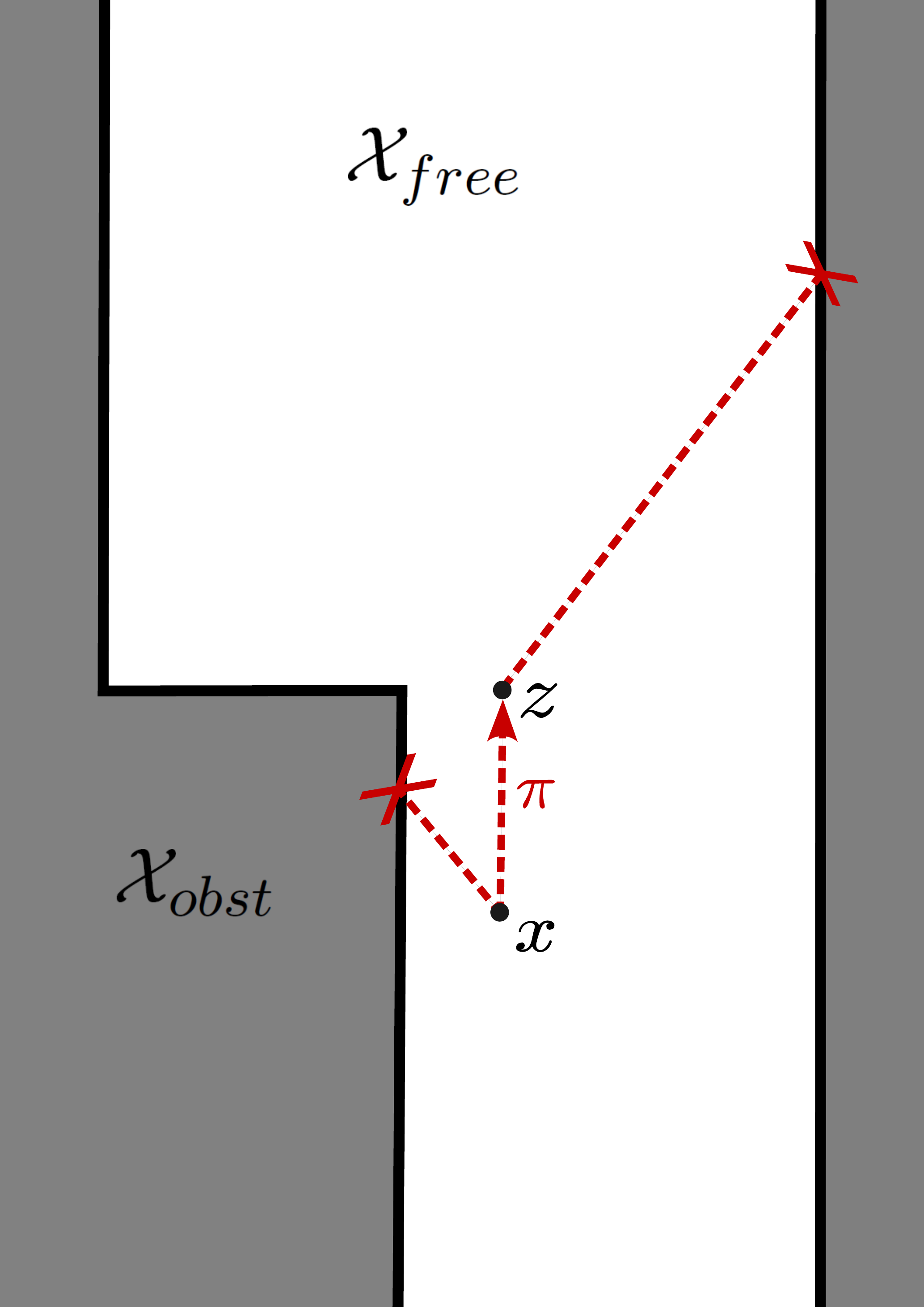}\label{fig:GalagaAbruptPassage}}
\hspace{2cm}
\subfloat[Slanted Wall]{
\includegraphics[clip=true,trim=0in 2.5in 0in 0in,height=2.25in]{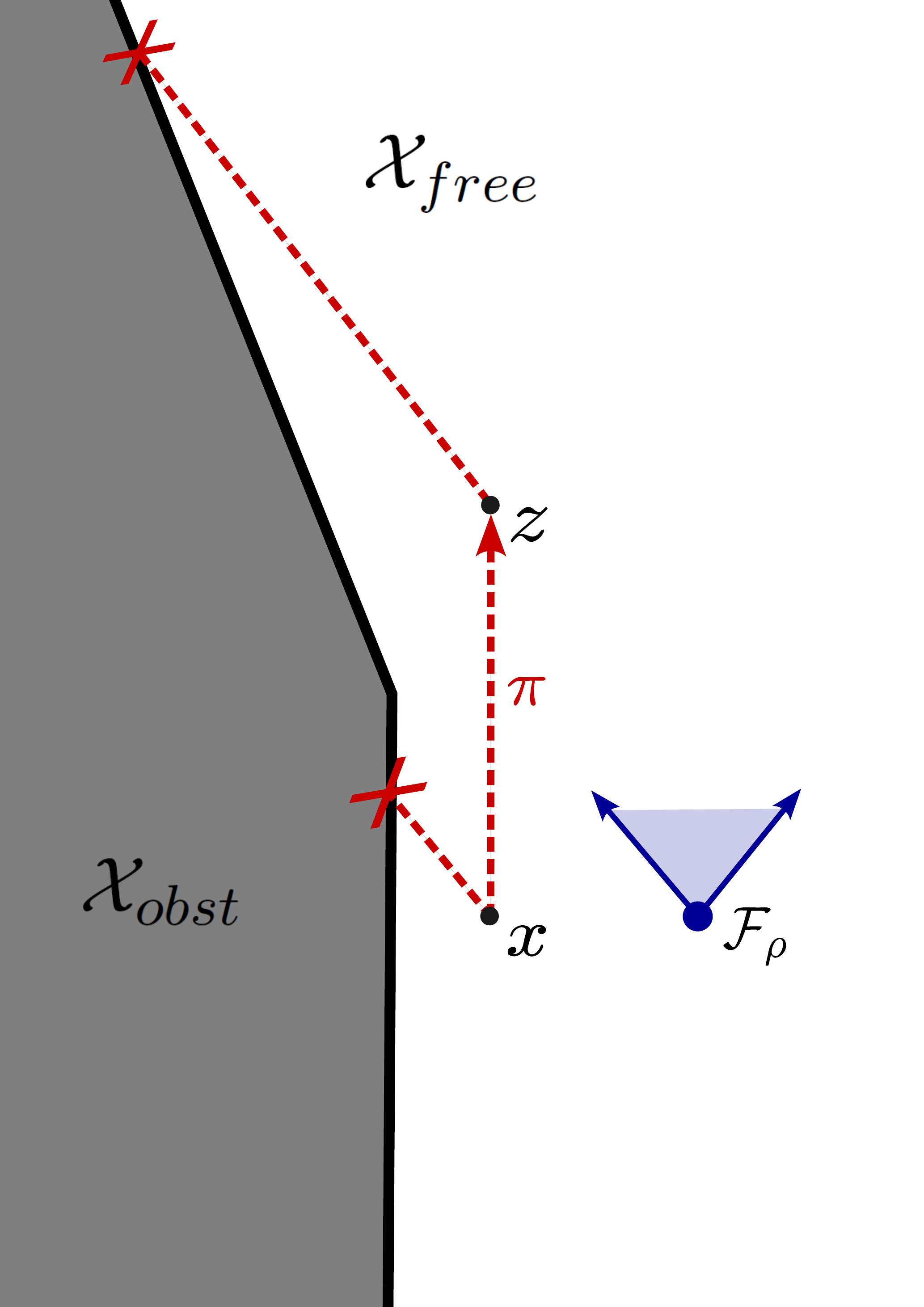}\label{fig:GalagaSlopedObstacle}}
\caption{Two $\xobst$ configurations in the Galaga model. 
Optimal trajectories (dashed lines) displayed 
propagating $x$ to $z$ and propagating 
each point to respective witness points.
A sample forward reachable set in this system
is provided (bottom right) for reference.}
\end{figure}

Further, modifying the structure of $\xfree,$ 
we can produce \Cref{eqn:NotLipschitz} 
without passing through a discontinuity in $\clr(\pi(\cdot)).$
Set
\[
    \xfree = (-1,\infty) \times (-\infty,0] \ \bigcup \ 
        \{(x_1,x_2): x_2 > -2x_1 - 2 \text{ and } x_2 > 0\},
\]
noting that $\xfree$ is no longer a long passageway,
but rather an unbounded region with only one wall bounding
motion in the negative $x_1$ direction.

Now, we consider $x = \left(-\frac{1}{2},-1\right)$
and $z = \left(-\frac{1}{2},2\right).$
Then, we have $d_c(x,z) = 3,$ while $\clr(z) = 5$
and $\clr(x) = \frac{1}{2},$ which satisfies
\Cref{eqn:NotLipschitz}.
We leave it as an exercise for the reader to confirm $\clr$ is
continuous everywhere in this modified setting.
\end{ex}

The following result establishes the (one--sided)  
limits of $\clr$ at points along admissible trajectories.

\begin{theorem}\label{thm:ClrIncreaseAlongTrajectories}
Given an admissible trajectory $\pi \in \Pi(\cdot,\cdot)$ and 
$\tau \in (0,T_\pi)$ for which $\clr(\pi(\tau))< \infty,$
it holds that
\[
    \lim_{t\to\tau^-} \ \clr(\pi(t)) \le \clr(\pi(\tau)) \le
    \lim_{t\to\tau^+} \ \clr(\pi(t)).
\]
\end{theorem}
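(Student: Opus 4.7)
The plan is to combine the triangle-type estimate from \Cref{prop:ClearanceDistanceBound} with the simple observation that any restriction of $\pi$ is itself an admissible trajectory with a cost that vanishes as the interval shrinks. Throughout, I read the ``$\lim$'' in the statement as a $\limsup$ on the left of $\tau$ and a $\liminf$ on the right; these inequalities imply the symbolic ones and are what the triangle inequality naturally delivers.

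First I would record a ``restriction lemma'': for any $0\le s < t \le T_\pi$, the reparametrization of $\pi|_{[s,t]}$ lies in $\Pi(\pi(s),\pi(t))$, so
\[
d_c(\pi(s),\pi(t)) \;\le\; \int_s^t \psi(\pi(r),\dot{\pi}(r))\,dr.
\]
Since $\pi([0,T_\pi])$ is compact in $\xspace$, standing hypothesis \textbf{(SH1)} bounds $\|\dot\pi\|$ almost everywhere, and continuity of $\psi$ then yields a uniform bound on the integrand. Absolute continuity of the integral gives $d_c(\pi(s),\pi(t)) \to 0$ as $|t-s|\to 0$.

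Next I would invoke \Cref{prop:ClearanceDistanceBound} twice. For the right-hand inequality, fix $t>\tau$ and apply it with the pair $(\pi(\tau),\pi(t))$:
\[
\clr(\pi(\tau)) \;\le\; \clr(\pi(t)) + d_c(\pi(\tau),\pi(t)).
\]
Letting $t\to\tau^+$, the restriction lemma forces $d_c(\pi(\tau),\pi(t))\to 0$, so $\clr(\pi(\tau)) \le \liminf_{t\to\tau^+}\clr(\pi(t))$. For the left-hand inequality, fix $t<\tau$ and apply the proposition with the pair $(\pi(t),\pi(\tau))$:
\[
\clr(\pi(t)) \;\le\; \clr(\pi(\tau)) + d_c(\pi(t),\pi(\tau)),
\]
which, since $\clr(\pi(\tau))<\infty$ by hypothesis, gives $\limsup_{t\to\tau^-}\clr(\pi(t)) \le \clr(\pi(\tau))$.

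I do not anticipate a serious obstacle; the only subtlety is ensuring the cost integral over $[s,t]$ genuinely vanishes despite $\dot\pi$ being defined only almost everywhere, which is handled by noting the integrand is bounded (hence in $L^1$) on the compact parameter interval $[0,T_\pi]$. The use of $\clr(\pi(\tau))<\infty$ is only needed to prevent the left-hand estimate from being vacuous; the right-hand estimate is valid even when $\clr(\pi(t))=\infty$.
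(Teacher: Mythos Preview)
Your argument correctly establishes the inequalities $\limsup_{t\to\tau^-}\clr(\pi(t)) \le \clr(\pi(\tau))$ and $\clr(\pi(\tau)) \le \liminf_{t\to\tau^+}\clr(\pi(t))$, and the final paragraph of the paper's proof does exactly this, via the same application of \Cref{prop:ClearanceDistanceBound} to restrictions of $\pi$. However, your reinterpretation of the statement sidesteps the main content of the theorem. The theorem, written with $\lim$, asserts that the one--sided limits \emph{exist}; your proposal does not establish this, and your claim that the $\limsup/\liminf$ inequalities ``imply the symbolic ones'' is only valid once existence is already known. The bulk of the paper's proof is devoted to precisely this point: applying \Cref{prop:ClearanceDistanceBound} between two points $\pi(t^\star)$ and $\pi(t)$ \emph{both} lying to the left of $\tau$, together with an $\varepsilon$--$\delta$ choice, one obtains $\liminf_{t\to\tau^-}\clr(\pi(t)) \ge \limsup_{t\to\tau^-}\clr(\pi(t)) - \varepsilon$ for every $\varepsilon > 0$, whence the left limit exists; the right limit is handled symmetrically (or equals $+\infty$ if $\clr(\pi(t)) = \infty$ for all $t > \tau$).

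The gap is not fatal in the sense that no new tool is required: the same triangle estimate you already use, now applied to pairs of points on the same side of $\tau$ rather than with one endpoint pinned at $\tau$, is what delivers existence. But existence is not a notational nicety here---it is invoked downstream in \Cref{cor:JumpDiscontinuousOnTrajectories}, where the strict inequality between the two one--sided limits is the conclusion---so it must be proved.
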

\begin{proof}
We will first establish the existence of the one--sided limits at $\tau.$

Regarding the limit $t\nearrow\tau,$ observe that $\clr(\pi(t)) < \infty$ for all $t \in [0,\tau]$ by
\Cref{prop:ClearanceDistanceBound} and assumption that 
$\clr(\pi(\tau)) < \infty.$
Let $\varepsilon > 0,$ and fix $0< \delta$ with
$\int_{\tau-\delta}^\tau \psi(\pi(t),\dot{\pi}(t)) dt < \frac{\varepsilon}{2}.$
Next, we fix $t^\star \in (\tau-\delta,\tau)$ so that
\[
    \clr(\pi(t^\star)) > \sup_{t\in(\tau-\delta,\tau)} \clr(\pi(t)) - \frac{\varepsilon}{2}.
\]
Applying \Cref{prop:ClearanceDistanceBound}, we compute
\begin{align*}
    \liminf_{t\to\tau^-} \ \clr(\pi(t)) &\ge \inf_{t\in(t^\star,\tau)} \clr(\pi(t))\\
    &\ge \inf_{t\in(t^\star,\tau)} \big(\clr(\pi(t)) + d_c(\pi(t^\star),\pi(t))\big) 
    - \int_{\tau-\delta}^\tau \psi(\pi(t),\dot{\pi}(t)) dt\\
    &> \clr(\pi(t^\star))-\frac{\varepsilon}{2}\\
    &> \sup_{t\in(\tau-\delta,\tau)}\clr(\pi(t))-\varepsilon\\ 
    &\ge \limsup_{t\to\tau^-} \ \clr(\pi(t))-\varepsilon.
\end{align*}
Taking the limit $\varepsilon \to 0,$ we conclude that $\lim_{t\to\tau^-} \clr(\pi(t))$
is well--defined.

The proof that $\lim_{t\to\tau^+}\clr(\pi(t))$ is well--defined 
follows in a symmetric manner if there exists any $\hat{t}\in(\tau,T_\pi)$
at which $\clr(\pi(\hat{t})) < \infty.$
Alternatively, if no such $\hat{t}$ exists,
the one--sided limit exists with
$\lim_{t\to\tau^+} \clr(\pi(t)) = \infty.$

Finally, the proof follows from 
\Cref{prop:ClearanceDistanceBound} again. 
In particular, we establish that
\[
    \clr(\pi(t_1)) -d_c(\pi(t_1),\pi(\tau)) \le \clr(\pi(\tau))
    \le \clr(\pi(t_2)) + d_c(\pi(\tau),\pi(t_2)),
\]
holds for all $0 < t_1 < \tau < t_2 < T_\pi,$ and we have 
\[
    \lim_{t_1\to\tau^-}d_c(\pi(t_1),\pi(\tau)) = 
    \lim_{t_2\to\tau^+}d_c(\pi(\tau),\pi(t_2)) = 0. \qedhere
\]
\end{proof}


\begin{lemma}\label{lem:ClrToWitness}(Principle of Optimality)
If $\clr(x) < \infty, \ y \in \wit(x),$ and $\pi \in \Pi(x,y)$ is an optimal trajectory, then
\[
    \clr(x) - \clr(\pi(t)) = d_c(x,\pi(t)) = c_{\pi(0,t)} \qquad \text{for all $t \in [0,T_\pi].$}
\]

\end{lemma}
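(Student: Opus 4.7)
The plan is to run a standard dynamic programming argument, splitting the optimal trajectory $\pi$ at time $t$ and using additivity of the cost functional together with the triangle--type inequality from \Cref{prop:ClearanceDistanceBound}.

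First I would observe that the cost integral is additive: writing $\pi_1 := \pi|_{[0,t]}$ and $\pi_2 := \pi|_{[t,T_\pi]}$ (viewed as trajectories from $x$ to $\pi(t)$ and from $\pi(t)$ to $y$ respectively), \Cref{Eqn:CostIntegral} gives
\[
    \clr(x) = c_\pi = c_{\pi_1} + c_{\pi_2},
\]
using $c_\pi = d_c(x,y) = \clr(x)$ by optimality and $y \in \wit(x)$. Both $\pi_1$ and $\pi_2$ are admissible (since $\pi$ is), so by definition of $d_c$ and $\clr$,
\[
    c_{\pi_1} \ge d_c(x,\pi(t)) \qquad \text{and} \qquad c_{\pi_2} \ge \clr(\pi(t)),
\]
the latter because $\pi_2$ is an admissible trajectory from $\pi(t)$ to $y \in \xobst$. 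In particular $\clr(\pi(t)) < \infty,$ so \Cref{prop:ClearanceDistanceBound} applies and yields the reverse--triangle bound
\[
    \clr(x) \le d_c(x,\pi(t)) + \clr(\pi(t)).
\]

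Chaining these inequalities produces
\[
    \clr(x) = c_{\pi_1} + c_{\pi_2} \ge d_c(x,\pi(t)) + \clr(\pi(t)) \ge \clr(x),
\]
forcing equality throughout. This simultaneously gives $c_{\pi_1} = d_c(x,\pi(t))$ and $\clr(x) - \clr(\pi(t)) = d_c(x,\pi(t))$, which is precisely the pair of identities claimed (noting $c_{\pi(0,t)} = c_{\pi_1}$). The endpoint cases $t = 0$ and $t = T_\pi$ are immediate since $d_c(x,x) = 0$ and $\clr(\pi(T_\pi)) = \clr(y) = 0$ for $y \in \xobddry$.

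There is no genuine obstacle here: the only point requiring a little care is checking that $\pi_2$ is admissible so that it can be used as a competitor in the infimum defining $\clr(\pi(t))$, and that $\clr(\pi(t)) < \infty$ so that \Cref{prop:ClearanceDistanceBound} can be applied in the direction we need. Both follow from the fact that $\pi$ itself is admissible and $y \in \wit(x) \subset \xobddry$.
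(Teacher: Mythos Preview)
Your proof is correct and follows essentially the same dynamic programming argument as the paper: split the optimal trajectory at time $t$, use additivity of cost, and combine with \Cref{prop:ClearanceDistanceBound} to squeeze the inequalities into equalities. The only notable difference is that the paper invokes Pontryagin's Maximum Principle to assert directly that $d_c(x,\pi(t)) = c_{\pi(0,t)}$, whereas your sandwich argument obtains this identity (together with $c_{\pi_2} = \clr(\pi(t))$) simultaneously from the chain of inequalities, which is arguably more self-contained.
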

\begin{proof}
    This is a standard result in optimal control theory with important
    connections to the theory of Hamilton--Jacobi equations and
    clearance / minimal time functions.
    For the reader's convenience, we provide an elementary proof
    
    Choose $t \in [0,T_\pi]$ and define $\pi(0,t) := \pi\big|_{[0,t]} \in \Pi(x,\pi(t)),$
    the restricted trajectory.
    By Pontryagin's Maximum Principle,
    we know that every segment of an optimal
    trajectory is likewise optimal. Thus, we know that $d_c(x,\pi(t)) = c_{\pi(0,t)}$
    and so 
    \[
        \clr(\pi(t)) \le c_\pi - c_{\pi(0,t)} = \clr(x) - d_c(x,\pi(t)).
    \]
    We thus have $\clr(\pi(t)) \le \clr(x) < \infty.$ 
    Meanwhile, applying \Cref{eqn:Corollary}, we derive	
    \[
        \clr(\pi(t)) \ge \clr(x) - d_c(x,\pi(t)),
    \]
    which concludes the proof.
\end{proof}

\section{Discontinuities of Clearance Functions}\label{sec:CLRDisc}

To study discontinuities of $\clr$, we introduce a structure modeling the
uniform propagating waves in $\xfree$ of points with
increasing clearance from $\xobst.$
This is akin to a solution to the eikonal equation (or 
grassfire algorithm) with an added restriction
that moving surfaces propagate only backward along 
admissible trajectories (c.f. \cite{N07} for a detailed discussion
of connections between minimal time functions and eikonal equations).
Quasi--stationary wave boundaries (or envelopes) can be observed in 
certain control systems, when configurations of $\xobst$ interact with
constraints on local controllability.
We investigate the fine properties of these envelopes in this section.

\begin{definition}\label{Def:WaveEnvelope}
Given $\rho > 0$, define the {\bf propagating wave} 
\[
    W_\rho := \bigcup_{y \in \xobddry} \R{\rho}(y) = \{ x \in \xfree : \clr(x) < \rho\}.
\]
For every point $x \in \xfree,$ denote the first and last 
arrival of wave fronts as
\[
    \rho_{min}(x) := \inf \{ \rho > 0 : x \in \partial W_\rho \}
    \quad \text{and} \quad
    \rho_{max}(x) := \sup \{ \rho > 0 : x \in \partial W_\rho \}.
\]
By convention, we set $\rho_{min}(x) = \rho_{max}(x) = \infty$ 
when $\clr(x) = \infty.$
Finally, define the {\bf wave envelope} of all states 
where $\partial W_\rho$ persists for some nontrivial time; 
\[
    E := \{ x\in\xfree:\rho_{min}(x)<\rho_{max}(x) \}.
\]
\end{definition}

\begin{remark}\label{rem:Envelope} \mbox{ }
\begin{enumerate}
    \item If $x \in \xfree \setminus E,$ then 
        $\rho_{min}(x) = \rho_{max}(x) = \clr(x).$
        Moreover, $x \in (W_\rho^c)^\circ$ for all $\rho < \clr(x)$
        and $x \in (W_\rho)^\circ$ for all $\rho > \clr(x).$
    \item Given $x \in E,$ it holds that $x \in \partial W_\rho$
        for all $\rho_{min}(x) \le \rho < \rho_{max}(x).$
    \item By lower semi--continuity of $\clr$ (c.f. \cite[Proposition 2.6]{WZ98}),
       we know that $\rho_{min}(\cdot) = \clr(\cdot)$ on $\xspace.$
\end{enumerate}
\end{remark}

\subsection{Discontinuities of $\clr$ in $\xfree$}
The following result characterizes all discontinuities in
clearance that arise away from the obstacle.

\begin{theorem}\label{thm:DiscontinuousOnE}
Let $x \in \xfree$. Then $\clr$ is discontinuous at $x$
if and only if $x \in E.$
\end{theorem}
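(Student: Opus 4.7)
The plan is to leverage the fact (recorded in Remark 5.2(c)) that $\clr$ is lower semi-continuous everywhere with $\rho_{\min}(\cdot) \equiv \clr(\cdot)$, so that continuity of $\clr$ at $x$ reduces to checking upper semi-continuity there. In particular, each $W_\rho = \{z \in \xfree : \clr(z) < \rho\}$ is open in $\xspace$, so $\partial W_\rho \subset W_\rho^c$ and $W_\rho^c \setminus \partial W_\rho = (W_\rho^c)^\circ$; I would record these facts at the outset.

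For the ``$x \in E \Rightarrow \clr$ is discontinuous at $x$'' direction, I would fix any $\rho$ with $\rho_{\min}(x) < \rho < \rho_{\max}(x)$. The identity $x \in \partial W_\rho$ from Remark 5.2(b) then produces a sequence $x_n \to x$ with $x_n \in W_\rho^c$, i.e.\ $\clr(x_n) \ge \rho > \clr(x)$, which breaks upper semi-continuity. For the converse, assume $x \notin E$, so $\rho_{\min}(x) = \rho_{\max}(x) = \clr(x)$. If $\clr(x) = \infty$, lower semi-continuity alone forces $\clr(z) \to \infty$ as $z \to x$, so continuity holds vacuously. Otherwise, given $\varepsilon > 0$, I would apply Remark 5.2(a) with $\rho = \clr(x) + \varepsilon$ to conclude that $x \in (W_{\clr(x)+\varepsilon})^\circ = W_{\clr(x)+\varepsilon}$, yielding a ball around $x$ on which $\clr < \clr(x) + \varepsilon$. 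Combined with lower semi-continuity, this gives continuity at $x$.

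The bulk of the logic above is bookkeeping; the technical content of the theorem is encapsulated in Remark 5.2. Statement (a) is immediate from the definitions of inf and sup: if $\rho < \rho_{\min}(x)$ then $\rho \notin \{\rho' : x \in \partial W_{\rho'}\}$, so $x \notin \partial W_\rho$, and combined with $x \in W_\rho^c$ (since $\clr(x) \ge \rho_{\min}(x) > \rho$) this places $x$ in the interior, with the symmetric statement handling $\rho > \rho_{\max}(x)$. Statement (b), on the other hand, is the main obstacle: it requires that $x \in \partial W_\rho$ throughout the entire half-open interval $[\rho_{\min}(x), \rho_{\max}(x))$, rather than only on the dense subset defining $\rho_{\max}$ as a sup. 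The key tool here is Proposition 2.2(a), the Hausdorff continuity of the reverse reachable sets $\R{\rho}(\cdot)$ comprising $W_\rho$; this continuity propagates the boundary condition in $\rho$, closing the set from the left at $\rho_{\min}(x)$ (together with lower semi-continuity of $\clr$), while leaving the right endpoint $\rho_{\max}(x)$ potentially excluded, consistent with a half-open interval.
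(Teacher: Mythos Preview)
Your proof of the theorem proper matches the paper's: both directions hinge on Remark~5.2, and the arguments you sketch are essentially what the paper does (the paper picks two intermediate values $\nu < \mu$ in the necessity direction rather than your single $\rho$, but this is cosmetic). The paper states Remark~5.2 without proof, so your supplementary justification of it goes beyond what the paper supplies.

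That justification, however, contains a genuine error. You assert that lower semi-continuity of $\clr$ makes each $W_\rho = \{\clr < \rho\}$ open in $\xspace$. This is false: for a lower semi-continuous function it is the \emph{super}level sets $\{\clr > c\}$ that are open, not the strict sublevel sets. Indeed, if every $W_\rho$ were open, then any $x$ with $\clr(x) < \rho$ would lie in $W_\rho^\circ$ and hence not in $\partial W_\rho$; combined with the (correct) observation that $x \notin \partial W_\rho$ whenever $\rho < \clr(x)$, this would force $\rho_{\min}(x) = \rho_{\max}(x)$ for every $x$, i.e.\ $E = \emptyset$, and the theorem would be vacuous. Fortunately you never actually invoke openness of $W_\rho$ in the argument itself, so the proof survives once the claim is deleted.

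You also overstate the difficulty of Remark~5.2(b). No Hausdorff continuity is required: given $\rho_{\min}(x) < \rho < \rho_{\max}(x)$, the definition of $\rho_{\max}$ as a supremum furnishes some $\rho' \in (\rho, \rho_{\max}(x))$ with $x \in \partial W_{\rho'}$; since $W_\rho \subset W_{\rho'}$ by monotonicity in $\rho$, any sequence in $W_{\rho'}^c$ converging to $x$ already lies in $W_\rho^c$, and together with $x \in W_\rho$ (as $\clr(x) = \rho_{\min}(x) < \rho$) this gives $x \in \partial W_\rho$ directly.
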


\begin{proof}
Addressing the necessity statement, assume $x \in E.$ 
Fix $\rho_{min}(x)<\nu<\mu<\rho_{max}(x),$
so that $x \in \partial W_\nu \cap \partial W_\mu.$
Then, there exist two sequences, $(y_m)$ and $(z_m),$ converging to $x,$
with $y_m \in \xfree \setminus W_{\mu}$ and 
$z_m \in W_{\nu}.$
Thus, we have $\clr(y_m) \ge \mu > \nu > \clr(z_m)$ for all $m$.
So, $\clr$ is discontinuous at $x$.

Addressing the sufficiency statement, assume $x \in E^c$
and let $(y_m) \subset \xfree$ be any sequence converging to $x.$
For any $\rho < \clr(x) = \rho_{min}(x),$ it holds that 
$x \in (W_\rho^c)^\circ.$ 
It follows that $y_m \in W_{\rho}^c,$ and consequently 
$\clr(y_m) \ge \rho,$ for $m$ sufficiently large.
Taking $\rho \nearrow \clr(x)$, we conclude
\begin{equation}\label{liminfBound}
    \liminf_{m \to \infty} \clr(y_m) \ge \clr(x).
\end{equation}
Likewise, for any $\rho > \clr(x) = \rho_{max}(x),$
it holds that $x \in (W_\rho)^\circ.$
It follows that $y_m \in W_{\rho},$ and consequently 
$\clr(y_m) < \rho,$ for $m$ sufficiently large.
Taking $\rho \searrow \clr(x)$, we derive
\[
    \limsup_{m \to \infty} \clr(y_m) \le \clr(x).
\]
Together with \Cref{liminfBound}, this proves that $\clr$ is continuous at $x.$
\end{proof}

We now confirm that, while traversing admissible trajectories 
$\pi$ in $\xfree,$ all discontinuities in $\clr(\pi(\cdot))$ 
must be accompanied by an instantaneous increase in clearance
(c.f. the discussion in \Cref{ex:GalagaIntro}).

\begin{corollary}\label{cor:JumpDiscontinuousOnTrajectories}
Suppose $\pi \in \Pi(x,y)$ is an admissible trajectory 
for which $\clr(\pi(\cdot))$ is discontinuous at some 
$\tau \in (0,T_\pi).$ Then 
\[
    \lim_{t\to\tau^-} \clr(\pi(t)) < \lim_{t\to\tau^+} \clr(\pi(t)).
\]
Further, it holds that $\pi(\tau) \in E,$ 
provided $\pi(\tau) \in \xfree.$
\end{corollary}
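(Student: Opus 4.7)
The plan is to read the strict inequality directly off of \Cref{thm:ClrIncreaseAlongTrajectories}, with a short patch covering the case $\clr(\pi(\tau)) = \infty,$ and then to obtain the envelope statement by transferring the discontinuity of $\clr \circ \pi$ at $\tau$ into a discontinuity of $\clr$ itself at $\pi(\tau),$ where \Cref{thm:DiscontinuousOnE} will immediately deliver $\pi(\tau) \in E.$

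First, in the principal case $\clr(\pi(\tau)) < \infty,$ \Cref{thm:ClrIncreaseAlongTrajectories} already guarantees that both one-sided limits exist and satisfy
\[
    \lim_{t \to \tau^-} \clr(\pi(t)) \ \le \ \clr(\pi(\tau)) \ \le \ \lim_{t \to \tau^+} \clr(\pi(t)).
\]
If the outer quantities were equal, the sandwich would force each to coincide with $\clr(\pi(\tau)),$ making $\clr \circ \pi$ continuous at $\tau$ and contradicting the hypothesis. Hence the inequality is strict.

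Next, I would patch the case $\clr(\pi(\tau)) = \infty.$ The key observation is that once $\clr$ is infinite at $\pi(\tau),$ it must remain infinite forward along $\pi.$ Indeed, for any $t \in (\tau, T_\pi]$ the subarc $\pi|_{[\tau, t]}$ is admissible with finite cost, hence $d_c(\pi(\tau), \pi(t)) < \infty,$ and \Cref{prop:ClearanceDistanceBound} applied with $x = \pi(\tau), z = \pi(t)$ forces $\clr(\pi(t)) = \infty.$ Thus the right limit is $+\infty,$ and discontinuity at $\tau$ (read in the extended--real sense) requires the left limit to be strictly less than $+\infty,$ again yielding strict inequality.

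Finally, for the envelope claim, I would extract any sequence $t_n \to \tau$ witnessing the discontinuity, so that $\clr(\pi(t_n)) \not\to \clr(\pi(\tau)).$ Continuity of $\pi$ gives $\pi(t_n) \to \pi(\tau)$ in $\xspace,$ and the $\clr$ values at these points fail to converge to $\clr(\pi(\tau));$ therefore $\clr$ itself is discontinuous at $\pi(\tau).$ Provided $\pi(\tau) \in \xfree,$ \Cref{thm:DiscontinuousOnE} then places $\pi(\tau) \in E.$ I do not anticipate a serious technical obstacle here; the only subtle points are the treatment of the extended-valued case (which lies outside the explicit hypothesis of \Cref{thm:ClrIncreaseAlongTrajectories}) and the transfer from discontinuity along $\pi$ to discontinuity on $\xspace,$ both handled in a few lines.
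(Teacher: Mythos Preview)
Your proposal is correct and follows exactly the route the paper takes: the paper's proof is the single sentence ``This result follows directly from \Cref{thm:ClrIncreaseAlongTrajectories,thm:DiscontinuousOnE},'' and you have simply unpacked that citation. Your extra care with the case $\clr(\pi(\tau)) = \infty$ (which lies outside the stated hypothesis of \Cref{thm:ClrIncreaseAlongTrajectories}) is a detail the paper does not spell out; your handling via \Cref{prop:ClearanceDistanceBound} is the natural one, though to be fully rigorous you would also want to note that the left limit exists in this case---this follows because finiteness of $\clr(\pi(t))$ for some $t<\tau$ propagates backward along $\pi$ (again by \Cref{prop:ClearanceDistanceBound}), after which the liminf/limsup computation in the proof of \Cref{thm:ClrIncreaseAlongTrajectories} goes through verbatim.
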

\begin{proof}
This result follows directly from
\Cref{thm:ClrIncreaseAlongTrajectories,thm:DiscontinuousOnE}.
\end{proof}

Turning from trajectories that pass through the wave envelope $E,$
the next result considers trajectories that travel along the envelope.
This result gives further insight into the structure of $E$ and how 
it connects to $\xobst.$

\begin{theorem}\label{thm:EAlongTrajectories}
Given $x \in E.$
If $y \in \wit(x)$ and $\pi \in \Pi(x,y)$ is an optimal trajectory,
then $\pi(t) \in E$ for all $t \in [0,T_\pi).$
\end{theorem}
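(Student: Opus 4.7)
The plan is to transport the envelope structure at $x$ backward along the optimal trajectory $\pi,$ using the Principle of Optimality (\Cref{lem:ClrToWitness}) together with the Hausdorff continuity of the forward attainable sets (\Cref{prop:CostProperties}(d)). The goal is to show that for each fixed $t \in [0,T_\pi),$ there is some $\rho' > \clr(\pi(t))$ with $\pi(t) \in \partial W_{\rho'},$ which yields $\rho_{max}(\pi(t)) > \clr(\pi(t)) = \rho_{min}(\pi(t))$ and hence $\pi(t) \in E.$

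First I would use \Cref{rem:Envelope}(3) to rewrite the hypothesis $x \in E$ as $\rho_{max}(x) > \clr(x),$ then fix some $\mu \in (\clr(x), \rho_{max}(x)).$ By \Cref{rem:Envelope}(2) we have $x \in \partial W_\mu,$ in particular $x \notin (W_\mu)^\circ,$ which produces a sequence $z_m \to x$ with $z_m \in \xfree \setminus W_\mu,$ i.e. $\clr(z_m) \ge \mu.$ Fix $t \in [0,T_\pi)$ and set $\rho := c_{\pi(0,t)} < c_\pi = \clr(x);$ by \Cref{lem:ClrToWitness}, $\clr(\pi(t)) = \clr(x) - \rho > 0,$ so in particular $\pi(t) \in \xfree.$

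The key step is the construction of a sequence near $\pi(t)$ with large clearance. Since $\pi(0,t) \in \Pi(x,\pi(t))$ has cost exactly $\rho,$ we have $\pi(t) \in \mathcal{A}_\rho(x).$ By the Hausdorff Lipschitz dependence of $\mathcal{A}_\rho(\cdot)$ on the initial point (\Cref{prop:CostProperties}(d)), for each sufficiently large $m$ there exists $w_m \in \mathcal{A}_\rho(z_m)$ with $w_m \to \pi(t).$ Since $w_m$ is reachable from $z_m$ by an admissible trajectory of cost $\rho,$ we have $d_c(z_m,w_m) \le \rho,$ and hence by \Cref{prop:ClearanceDistanceBound}
\[
    \clr(w_m) \ge \clr(z_m) - d_c(z_m,w_m) \ge \mu - \rho =: \rho'.
\]
Note $\rho' = \mu - \rho > \clr(x) - \rho = \clr(\pi(t)),$ and $\clr(w_m) \ge \rho' > 0$ also forces $w_m \in \xfree.$

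Finally I would conclude: the sequence $w_m \to \pi(t)$ with $\clr(w_m) \ge \rho'$ shows $\pi(t) \notin (W_{\rho'})^\circ,$ while $\clr(\pi(t)) < \rho'$ gives $\pi(t) \in W_{\rho'} \subseteq \overline{W_{\rho'}}.$ Therefore $\pi(t) \in \partial W_{\rho'},$ which by definition yields $\rho_{max}(\pi(t)) \ge \rho' > \clr(\pi(t)) = \rho_{min}(\pi(t)),$ so $\pi(t) \in E.$ The only delicate point is ensuring the approximating points $w_m$ actually achieve cost exactly $\rho$ from $z_m$ (rather than at most $\rho$), which is exactly what the attainable set version of \Cref{prop:CostProperties}(d) provides; without this refinement one would only get $d_c(z_m,w_m) \le \rho$ from a softer forward reachable set argument, but either bound suffices for the clearance inequality above.
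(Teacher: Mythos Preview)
Your argument is correct and is essentially the paper's own proof: both transport points $z$ near $x$ with $\clr(z) \ge \mu > \clr(x)$ forward along nearby cost-$\rho$ trajectories via the Hausdorff Lipschitz continuity of the attainable sets (\Cref{prop:CostProperties}(d)), then apply \Cref{prop:ClearanceDistanceBound} and \Cref{lem:ClrToWitness} to obtain points near $\pi(t)$ with clearance at least $\mu - \rho > \clr(\pi(t)).$ The only cosmetic difference is that the paper phrases the conclusion as ``$\clr$ is discontinuous at $\pi(t)$'' and then invokes \Cref{thm:DiscontinuousOnE}, whereas you go directly to $\pi(t) \in \partial W_{\rho'}$; these are equivalent endpoints of the same argument.
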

\begin{proof}
Set $0 < \varepsilon < \frac{1}{2}(\rho_{max}(x) - \clr(x))$ and 
$t \in [0,T_\pi).$ Let $r > 0$ be arbitrary.

By \Cref{prop:CostProperties}(d), there exists 
$\delta > 0$ so that for all $z \in B_{\delta}(x),$ there exists 
$\pi' \in \Pi(z,B_r(\pi(t)))$ transporting $z$ into $B_r(\pi(t)),$ with
\[
    c_{\pi'} = c_{\pi(0,t)} = \int_0^t \psi(\pi(s),\dot{\pi}(s)) ds.
\]
Here we recall that $\pi(0,t) := \pi\big|_{[0,t]} \in \Pi(x,\pi(t))$ 
denotes the restricted trajectory.

Let $\rho \in (\clr(x)+\varepsilon, \rho_{max}(x)).$
By \Cref{rem:Envelope}(b,c) we know that $x \in \partial W_\rho.$
Thus, we can select $z \in B_\delta(x) \cap W_\rho^c$ and compute;
applying \Cref{prop:ClearanceDistanceBound} along $\pi'$
and then \Cref{lem:ClrToWitness} along $\pi(0,t),$
\begin{align*}
  \clr(\pi'(T_{\pi'})) &\ge \clr(z) - d_c(z,\pi'(T_{\pi'}))
    \ge \clr(z) - c_{\pi'}\\
    &> (\clr(x) + \varepsilon) - c_{\pi(0,t)} = 
    \clr(\pi(t)) + \varepsilon. 
\end{align*}
Since $\pi'(T_{\pi'}) \in B_r(\pi(t))$ and $r > 0$ is arbitrary, we 
conclude that $\clr$ is discontinuous at $\pi(t) \in \xfree.$
The claim follows by \Cref{thm:DiscontinuousOnE}.
\end{proof}

\subsection{Discontinuities of $\clr$ on $\xobddry$}

Having characterized all discontinuities of $\clr$ in $\xfree$ 
(\Cref{thm:DiscontinuousOnE}),
we turn now to discuss discontinuities on $\xobddry.$ 
The analysis on the boundary differs, by necessity, from our
approach in $\xfree.$
Notably, observe that many points $y \in \xobddry$ 
will generically reside in $\partial W_\rho$ for all $\rho > 0,$
though this persistence of wave boundaries is certainly not an 
indication of discontinuity.
Indeed, if $\clr$ is continuous at $y \in \xobddry,$
then it must hold that $y \in \partial W_\rho$ for all $\rho > 0.$

We begin with two propositions demonstrating how
interactions between reachable sets (centered at $y \in \xobddry$)
and $\xobst$ lead to clearance discontinuities.
Informally, the first proposition derives discontinuity of 
$\clr$ at $y \in \xobddry$ if all obstacle boundary points 
in a neighborhood are uniformly unreachable from free space.
The second proposition is a small modification of the first,
assuming that free space is instantaneously accessible from $y,$ 
while all boundary points that happen to be in a forward reachable
set from $y$ are uniformly unreachable from free space.
We note that neither of these results requires any explicit 
directionality in the admissible velocities at $y,$
though such a condition may be a consequence of our so--called
uniform unreachability assumption. 

\begin{proposition}\label{prop:CliffInteriorDisc}
Suppose $y_0 \in \xobddry$ and there exist $r, \rho > 0$ so that 
$\R{\rho}(y) \cap \xfree = \emptyset$
for all $y \in B_r(y_0) \cap \xobddry$. Then $\clr$ is discontinuous at $y_0.$
\end{proposition}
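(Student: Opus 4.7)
The plan is to show that $\clr(y_0)=0$ while $\clr$ stays uniformly bounded below by some $\rho^*>0$ on a sufficiently small free-space neighborhood of $y_0$. The first observation is immediate: since $y_0\in\xobddry\subset\xobst$, the constant trajectory at $y_0$ is admissible, so $d_c(y_0,y_0)=0$ and $\clr(y_0)=0$. Since $y_0\in\partial\xobst\subseteq\overline{\xfree}$ (as $\xfree$ is open and $y_0\in\xobst$), we may pick a sequence $(x_n)\subset\xfree$ with $x_n\to y_0$, and once we produce $\rho^*>0$ with $\clr(x_n)\ge\rho^*$ for all large $n$, discontinuity follows.

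To produce such a $\rho^*$, I would first extract uniform bounds on the candidate optimal trajectories. By \textbf{(SH1)}, $\overline{B_r(y_0)}$ has a velocity bound $M>0$. By continuity and positivity of $\psi$ on the compact set $\overline{B_r(y_0)}\times\overline{B_M(0)}$, there is $\psi_{\min}>0$ with $\psi(y,v)\ge\psi_{\min}$ on that set. Set $\rho^*:=\min\bigl\{\rho,\,\tfrac{r\psi_{\min}}{3M}\bigr\}$ and $\delta:=r/3$, so that $\delta+M\rho^*/\psi_{\min}<r$.

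Next, I would argue by contradiction: suppose some $x\in B_\delta(y_0)\cap\xfree$ has $\clr(x)<\rho^*$. By \Cref{prop:CostProperties}(c) choose $y^*\in\wit(x)$ and an optimal $\pi\in\Pi(x,y^*)$ with $c_\pi=\clr(x)<\rho^*$. Let $T^*:=\sup\{t\in[0,T_\pi]:\pi(s)\in\overline{B_r(y_0)}\text{ for all }s\le t\}$. For $t\le T^*$ the velocity bound $\|\dot\pi(s)\|\le M$ applies and the cost bound gives $\psi_{\min}t\le c_\pi<\rho^*$, hence
\[
\|\pi(t)-y_0\|\le\|\pi(t)-x\|+\|x-y_0\|\le Mt+\delta<\frac{M\rho^*}{\psi_{\min}}+\delta<r.
\]
By continuity this forces $T^*=T_\pi$, so the whole trajectory (including its endpoint $y^*$) lies in $B_r(y_0)$. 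Thus $y^*\in B_r(y_0)\cap\xobddry$, and the hypothesis gives $\R{\rho}(y^*)\cap\xfree=\emptyset$. But $x\in\xfree$ and $d_c(x,y^*)<\rho^*\le\rho$ means $x\in\R{\rho}(y^*)\cap\xfree$, a contradiction. Hence $\clr(x)\ge\rho^*$ on $B_\delta(y_0)\cap\xfree$, which combined with $\clr(y_0)=0$ establishes discontinuity at $y_0$.

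The only delicate step is the continuity/bootstrap argument showing the optimal trajectory actually stays inside $\overline{B_r(y_0)}$ so that the velocity bound $M$ and cost density bound $\psi_{\min}$ can be applied; everything else is bookkeeping with the standing hypotheses and the existence of witness points.
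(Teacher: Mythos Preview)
Your proof is correct and follows essentially the same approach as the paper: both establish a uniform lower bound on $\clr$ over $B_\delta(y_0)\cap\xfree$ by showing that forward reachable sets of small cost from such points remain inside $B_r(y_0)$, forcing any witness to lie in $B_r(y_0)\cap\xobddry$ where the hypothesis applies. The only cosmetic difference is that you derive the containment explicitly from $M$ and $\psi_{\min}$ and argue by contradiction, whereas the paper invokes compactness to assert $\B{\tilde\rho}(x)\subset B_{r/2}(x)$ directly and phrases the conclusion as a two--case split on whether $\wit(x)$ meets $B_r(y_0)$.
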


\begin{proof}
By compactness of $\overline{B_{r/2}(y_0)},$ we fix $\tilde{\rho} > 0$ sufficiently
small that $\B{\tilde{\rho}}(x) \subset B_{r/2}(x)$ for all $x \in B_{r/2}(y_0).$
For all such $x$ it either holds that $\wit(x) \cap B_r(y_0) \ne \emptyset,$
in which case $\clr(x) \ge \rho,$ or $\wit(x) \cap B_r(y_0) = \emptyset,$
in which case we conclude $\clr(x) \ge \tilde{\rho}.$
Thus, there is a type of jump discontinuity in clearance at $y_0,$ in
the sense that given any sequence $(x_n) \subset \xfree$ converging to $y_0,$
it follows that
\[
    \liminf_{n \to \infty} \clr(x_n) \ge \min\{\rho, \tilde{\rho}\} > 0. \qedhere
\]
\end{proof}

\begin{proposition}\label{prop:CliffForwardDisc}
Suppose $y_0 \in \xobddry$ with $\F{\mu}(y_0) \cap \xfree \ne \emptyset$ for all $\mu > 0,$
and there exist $\rho, \rho_0 > 0$ so that, for all $y \in \F{\rho_0}(y_0) \cap \xobddry,$
it holds that $\R{\rho}(y) \cap \xfree = \emptyset.$
Then $\clr$ is discontinuous at $y_0.$
\end{proposition}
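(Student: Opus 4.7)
The plan is to proceed by contradiction: assume $\clr$ is continuous at $y_0$ (noting $\clr(y_0) = 0$ since $y_0 \in \xobst$), and derive an incompatibility with the hypothesis that obstacle points forward-reachable from $y_0$ are uniformly inaccessible from $\xfree$. The rough idea is that the forward trajectory from $y_0$ into $\xfree$ arrives at a nearby free space point $x_n$, and if that point had small clearance, its witness point would also lie in $\F{\rho_0}(y_0)$, contradicting the uniform unreachability hypothesis.

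First, I use the hypothesis $\F{1/n}(y_0) \cap \xfree \ne \emptyset$ to extract a sequence $x_n \in \xfree$ with $d_c(y_0, x_n) < 1/n$. To verify Euclidean convergence $x_n \to y_0$, I invoke standing hypothesis {\bf (SH1)} together with continuity and positivity of $\psi$ on a compact neighborhood: fix $M > 0$ bounding $\|v\|$ and $\psi_* > 0$ bounding $\psi(z,v)$ from below, for $z \in \overline{B_1(y_0)}$ and $v \in F(z)$. Any trajectory $\pi_n \in \Pi(y_0, x_n)$ with $c_{\pi_n} < 1/n < \psi_*/M$ must remain in $\overline{B_1(y_0)}$ (otherwise it would accumulate cost at least $\psi_*/M$ before exit), so $T_{\pi_n} < 1/(n\psi_*)$ and $\|x_n - y_0\| \le M T_{\pi_n} \to 0$.

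Assuming $\clr$ were continuous at $y_0$, we would have $\clr(x_n) \to 0$. For $n$ large enough that $\clr(x_n) < \rho$ and $\tfrac{1}{n} + \clr(x_n) < \rho_0$, apply \Cref{prop:CostProperties}(c) to extract $y_n \in \wit(x_n)$, so $y_n \in \xobddry$ with $d_c(x_n, y_n) = \clr(x_n) < \rho$. The quasi-metric triangle inequality then gives
\[
    d_c(y_0, y_n) \le d_c(y_0, x_n) + d_c(x_n, y_n) < \tfrac{1}{n} + \clr(x_n) < \rho_0,
\]
so $y_n \in \F{\rho_0}(y_0) \cap \xobddry$. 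The standing hypothesis then forces $\R{\rho}(y_n) \cap \xfree = \emptyset$, but $x_n \in \xfree$ with $d_c(x_n, y_n) < \rho$ gives $x_n \in \R{\rho}(y_n) \cap \xfree$, the desired contradiction.

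The main obstacle I anticipate is the translation from cost-distance convergence to Euclidean convergence in the first step, since the cost-distance is only a quasi-metric with no a priori topological equivalence to the ambient norm on $\xspace$. Once this localization is pinned down via the uniform bounds $M$ and $\psi_*$ on a neighborhood of $y_0$, the remainder is a clean triangle-inequality chase combining the two structural assumptions: forward accessibility of $\xfree$ from $y_0$ propagates free-space points $x_n$ close to $y_0$, while backward inaccessibility prevents any such $x_n$ from being near an obstacle witness point.
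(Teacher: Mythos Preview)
Your argument is correct and follows essentially the same approach as the paper: both exploit the quasi-metric triangle inequality to place the witness of a nearby free-space point inside $\F{\rho_0}(y_0) \cap \xobddry$, then invoke the unreachability hypothesis to bound clearance below. The paper phrases this directly rather than by contradiction---establishing the uniform lower bound $\clr(x) \ge \min\{\rho, \rho_0/2\}$ for all $x \in \F{\rho_0/2}(y_0) \cap \xfree$ via a dichotomy on whether $\wit(x)$ meets $\F{\rho_0}(y_0)$---and glosses over the cost-to-Euclidean convergence step that you carefully spell out.
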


\begin{proof}
This argument is essentially the same as \Cref{prop:CliffInteriorDisc},
with the modification that we consider $x \in \F{\rho_0/2}(y_0)$ and note
that $\F{\eta}(x) \subset \F{\rho_0}(y_0)$ for all $\eta \in [0,\rho_0/2].$
We thus conclude that either $\clr(x) \ge \rho_0/2$ 
(whenever $\wit(x) \cap \F{\rho_0}(y_0) = \emptyset$),
or else $\clr(x) \ge \rho$ (when $\wit(x) \cap \F{\rho_0}(y_0) \ne \emptyset$).
\end{proof}

We close out the paper focusing on a particular type of 
discontinuous boundary point that appears to generate 
free space discontinuities.
That is, we study the points $y_0 \in \xobddry$ around which one can find
envelope points propagating out into $\xfree$ on the boundaries 
of arbitrarily small waves.
We introduce the following precise definition.

\begin{definition}\label{def:EnvGen}
An obstacle boundary point $y_0 \in \xobddry$ is called an {\bf envelope generator}
if there exists a sequence $x_n \in E$ with $x_n \rightarrow y_0$
and $\rho_{min}(x_n) \rightarrow 0.$
\end{definition}

\begin{ex} We first present a few simple settings where
envelope generators are readily identifiable by way of \Cref{thm:EAlongTrajectories}.
\begin{enumerate}
    \item (Galaga: Sharp Corner) Consider the system
    \Cref{eqn:GalagaSystem} with the abrupt passageway configuration 
    for $\xobst$ (\Cref{fig:GalagaAbruptPassage}).
    It holds in this setting that $y_0 = (-1,0)$ is an 
    envelope generator.
    To see this, we apply \Cref{thm:EAlongTrajectories},
    noting, for instance, that 
    $x = \left(-\frac{1}{2},-\frac{1}{2}\right) \in E$
    and $y_0 \in \wit(x),$ with $\pi(t) = x + (-t,t)$
    an optimal trajectory connecting $x$ to $y_0.$  
    
    \item (Generalized Galaga: Sharp Corner)
    Generalizing the Galaga system, we allow for control 
    of acceleration in the $x_2$--direction. Namely, consider
\begin{equation}\label{eqn:ModGalagaSystem}
    \begin{cases} \dot{x}_1 = u_1 \\ 
        \dot{x}_2 = x_3 \\
        \dot{x}_3 = u_2
    \end{cases}
    \qquad \text{for} \qquad (u_1,u_2) \in \uspace := [-1,1]^2.
\end{equation}
    We work in $\xspace = \mathbb{R}^3$ and consider the following
    extension of our obstacles,
\begin{align*}
    \xfree &= (-1,2) \times (-\infty,0] \times \mathbb{R} 
        \ \bigcup \ (-5,2) \times (0,\infty) \times \mathbb{R} 
        \qquad \text{and} \\
    \xobst &= \xspace\setminus \xfree.
\end{align*}
    For simplicity, we again consider $\psi \equiv 1,$ though the following
    argument is essentially unchanged under a number of simple running cost
    functions. For instance, one might consider
    $\psi(\pi, \dot{\pi}) := \sqrt{\|(\dot{\pi}_1, \dot{\pi}_2, 0)\|^2},$
    so that $c_\pi$ is the arclength of the path described by the trajectory $\pi$
    as seen in the $x_1x_2$--plane.
    
    Given any $v > 0,$ it follows from
    \Cref{thm:EAlongTrajectories} that $y_0 := (-1,0,v)$ 
    is an envelope generator. To see this, we note that
    \[
        x = \left(-\frac{1}{2}, 
            -\frac{1}{4}\big(1+2v\big),v+\frac{1}{2}\right) \in \xfree
    \]
    and $y_0 \in \wit(x),$ with optimal trajectory
    \[
        \pi(t) = x + \left(-t,vt+\frac{t}{2}-\frac{t^2}{2},-t\right)
    \]
    connecting $x$ to $y_0.$ To see that $x \in E,$ we note
    that $\clr(x) = d_c(x,y_0) = \frac{1}{2},$ while the clearance of 
    any point 
    \[
        x_n := x + \left(\frac{1}{n}, 0,0\right) \qquad 
            \text{for $n$ sufficiently large}
    \]
    is determined by its cost distance to witness points on the right wall;
    $y = (2,\cdot,\cdot) \in \xobddry$ (\Cref{fig:GalagaModLrg,fig:GalagaModMed}) 
    or on the shelf above the narrow passage;  
    $y' = (\cdot,0,\cdot) \in \xobddry$ (\Cref{fig:GalagaModSmall}),
    since none of these points can propagate to the left wall (prior to the 
    corner) along admissible trajectories.
    We thus conclude that $\limsup_{z\to x}\clr(z) > \clr(x) = \frac{1}{2}.$
\end{enumerate}
\begin{figure}[tbhp]
\centering
\subfloat[$x_3 >\!> 0$]{
\includegraphics[clip=true,trim=0.5in 1.5in 0.5in 2in,height=1.85in]{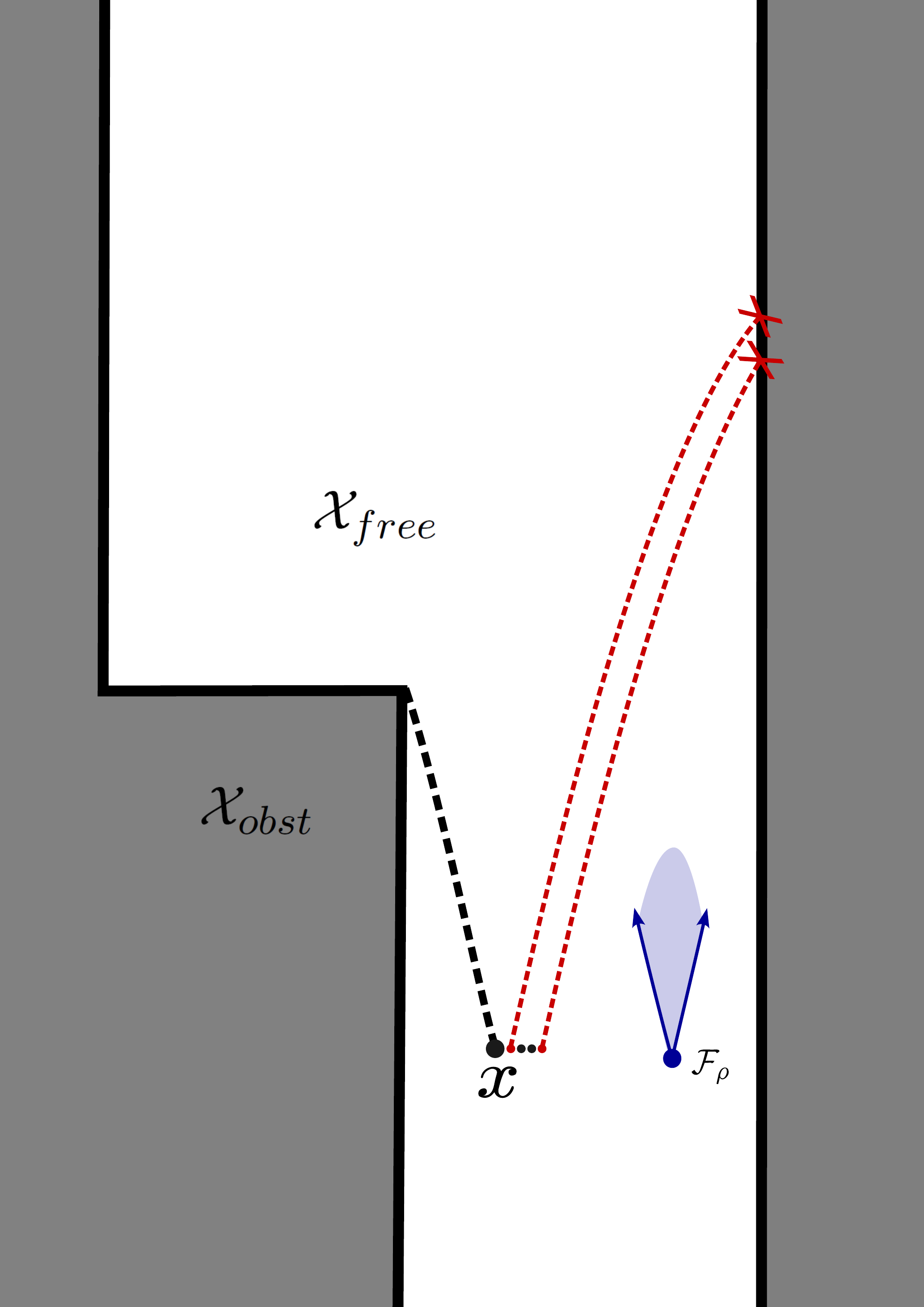}\label{fig:GalagaModLrg}}
\hspace{0.15cm}
\subfloat[$x_3 > 0$]{
\includegraphics[clip=true,trim=0.5in 1.5in 0.5in 2in,height=1.85in]{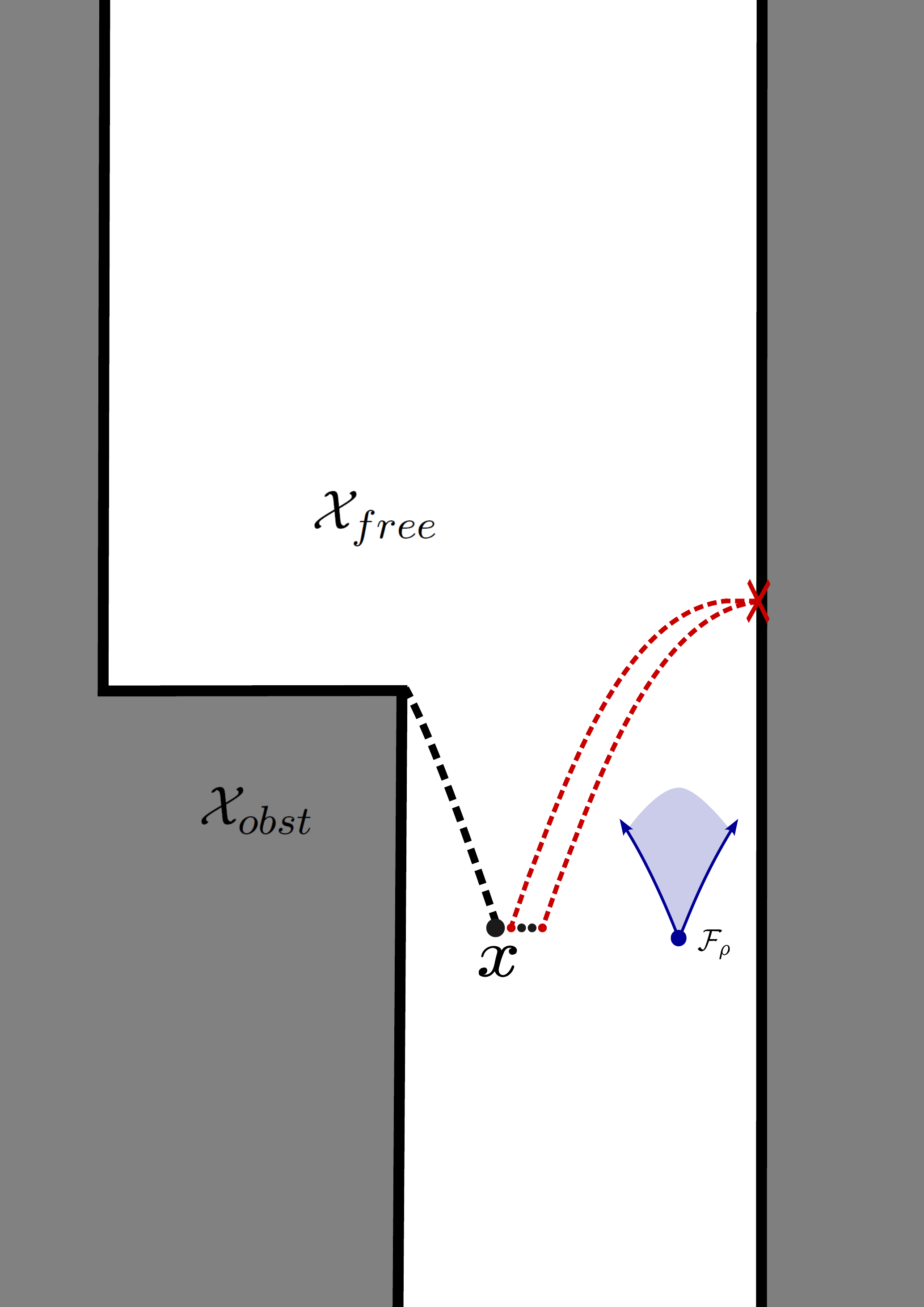}\label{fig:GalagaModMed}}
\hspace{0.15cm}
\subfloat[$x_3 \approx 0$]{
\includegraphics[clip=true,trim=0.5in 1.5in 0.5in 2in,height=1.85in]{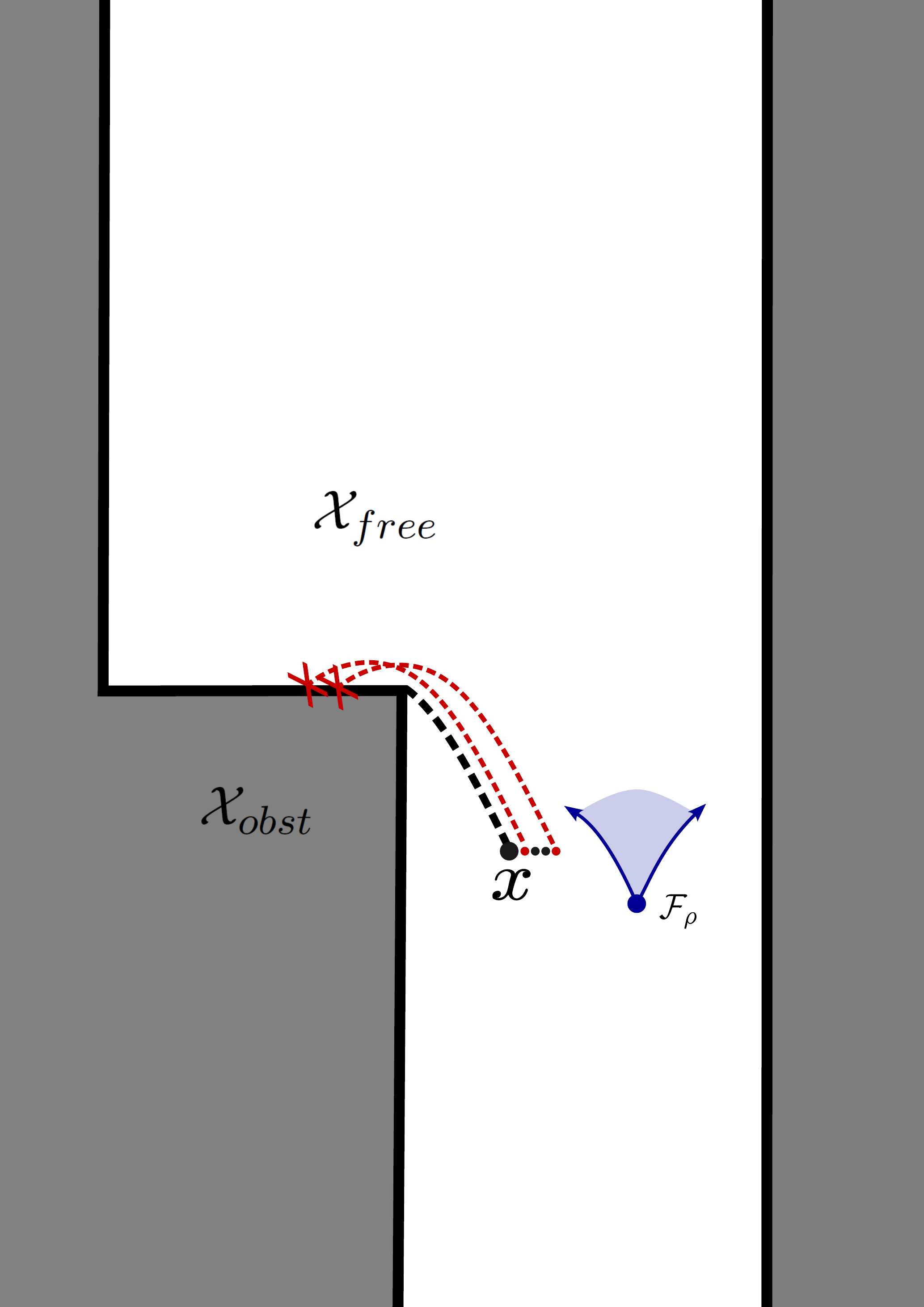}\label{fig:GalagaModSmall}}
\caption{Selections of points $x$ in the Generalized Galaga system with 
clearance witnessed by $y_0 := (-1,0,v),$ accompanied by points nearby
confirming the fact that $x \in E.$
All images are projections of $\xspace$ onto the 
$x_1x_2$--plane (the configuration space here).}
\label{fig:ModifiedGalaga}
\end{figure}
\end{ex}

\subsection{Sufficient Conditions for Envelope Generators}

We conclude the paper with our main theorem, providing sufficient conditions
that guarantee a selected point $y_0 \in \xobddry$ is an envelope generator.
For convenience of notation, we introduce the following sets which decompose 
$\xobddry.$
Motivated by the shelf points in the Galaga systems above 
(i.e. $(x_1,x_2) \in \xobddry$ with $x_2 = 0$), we define
\begin{equation}\label{eqn:ShelfDef}
    \mathcal{S} := \left\{ y \in \xobddry: \R{\rho}(y) \cap \xfree = \emptyset 
        \ \text{for some $\rho > 0$} \right\},
\end{equation}
while the cliff points (i.e. $(x_1,x_2) \in \xobddry$ with $x_1 = -1$) 
motivate the definition
\begin{equation}\label{eqn:CliffDef}
    \mathcal{C} := \left\{ y \in \xobddry: \R{\rho}(y) \cap \xfree \ne \emptyset 
        \ \text{for all $\rho > 0$} \right\}.
\end{equation}
Additionally, we introduce the following assumptions on the structure
of the control system and the obstacle set, in a neighborhood of 
$y_0 \in \xobddry.$ 

\begin{itemize}
\item[{\bf (H1)}] There exists a vector 
$\xi \in \xspace$ and a radius $r^\star > 0$ so that
    \begin{itemize}
    \vspace{0.5em}
    \item[a)] $\displaystyle 
        h_F(y_0,\xi) := \min_{v \in F(y_0)} \langle v , \xi \rangle > 0,$ and 
    \item[b)] defining the ball 
    $\displaystyle B_{r^\star}(y^\star) := B_{r^\star}\left(y_0 + \frac{r^\star}{\|\xi\|} \xi \right),$
    it holds that
    \[
        B_{r^\star}(y^\star) \cap \xobddry \subseteq \mathcal{S}.
    \]
    \end{itemize}
    \item[{\bf (H2)}] Locally, the structure of $\xobst$ is such that $B_r(y_0) \cap \xfree$
        is a connected set for all $0 < r < r_0$ sufficiently small.
\end{itemize}

\begin{lemma}\label{lem:NoWitnessesInStarBall}
Suppose {\bf H1(b)} holds and $x \in \xfree.$
Then $\wit(x) \cap B_{r^\star}(y^\star) = \emptyset.$
\end{lemma}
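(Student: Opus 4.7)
The plan is to argue by contradiction: assume some $y \in \wit(x) \cap B_{r^\star}(y^\star)$ exists, and derive a violation of \textbf{H1(b)}. Observe first that we may assume $\clr(x) < \infty$, since otherwise $\wit(x) = \emptyset$ and the statement is vacuous. Since $y \in B_{r^\star}(y^\star) \cap \xobddry$, hypothesis \textbf{H1(b)} places $y \in \mathcal{S}$, so there is some $\rho_y > 0$ with $\R{\rho_y}(y) \cap \xfree = \emptyset$, i.e.\ no free--space point can reach $y$ at cost less than $\rho_y$. The strategy is then to produce a point of $\xfree$ reaching $y$ in arbitrarily small cost, contradicting this shelf property.

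The candidate free--space point will be taken from the tail of an optimal trajectory. By \Cref{prop:CostProperties}(c), I obtain $\pi \in \Pi(x,y)$ with $c_\pi = d_c(x,y) = \clr(x)$. Applying the Principle of Optimality (\Cref{lem:ClrToWitness}), we will have $\clr(\pi(t)) = \clr(x) - c_{\pi(0,t)}$ for every $t \in [0,T_\pi]$. Strict positivity of $\psi$ guarantees $c_{\pi(0,t)} < \clr(x)$ whenever $t < T_\pi$, so $\clr(\pi(t)) > 0$ on $[0,T_\pi)$; combined with the admissibility constraint $\pi(t) \in \overline{\xfree}$, this forces $\pi(t) \in \xfree$ throughout $[0,T_\pi)$.

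To finish, I will choose $t \in [0, T_\pi)$ close enough to $T_\pi$ that the tail cost $c_\pi - c_{\pi(0,t)} < \rho_y$, which is available from continuity of $t \mapsto c_{\pi(0,t)}$ together with $c_{\pi(0,T_\pi)} = c_\pi = \clr(x)$. After a trivial time--shift, the restriction $\pi|_{[t,T_\pi]}$ is an admissible trajectory from $\pi(t)$ to $y$ with cost strictly less than $\rho_y$, so $\pi(t) \in \R{\rho_y}(y) \cap \xfree$, contradicting $y \in \mathcal{S}$.

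I do not anticipate any serious obstacle here; the lemma functions as a basic compatibility check between the witness set and the shelf decomposition from \Cref{eqn:ShelfDef}. The only mildly delicate point is ensuring the optimal trajectory does not meet $\xobddry$ strictly before its endpoint, and this is immediate from the Principle of Optimality combined with $\psi > 0$.
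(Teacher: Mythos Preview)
Your argument is correct and shares the same core idea as the paper's proof: a witness $y\in B_{r^\star}(y^\star)$ would lie in $\mathcal{S}$, yet any admissible trajectory from $x\in\xfree$ to $y$ furnishes free--space points reaching $y$ at arbitrarily small cost, contradicting the shelf property. The paper packages this slightly differently: rather than invoking \Cref{lem:ClrToWitness} to keep the optimal trajectory inside $\xfree$ on $[0,T_\pi)$, it takes an arbitrary $\pi\in\Pi(x,y)$, sets $\tau_\pi:=\sup\{t:\pi(t)\in\xfree\}$, observes $\pi(\tau_\pi)\in\xobddry$, and uses $y\in\mathcal{S}$ to force $\tau_\pi<T_\pi$; the existence of a strictly cheaper obstacle contact then rules out $y\in\wit(x)$. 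Your route trades this last--exit argument for the Principle of Optimality, which is a small amount of extra machinery but makes the ``$\pi(t)\in\xfree$ on $[0,T_\pi)$'' step completely transparent.
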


\begin{proof}
    Let $x \in \xfree.$ 
    Suppose $y \in \xobddry \cap B_{r^\star}(y^\star)$
    and $\pi(\cdot) \in \Pi(x,y).$
    Define $\tau_\pi := \sup \{t > 0: \pi(t) \in \xfree\}.$
    Note that $\pi(\tau_\pi) \in \xobddry.$
    By {\bf H1(b),} we have $y \in \mathcal{S}$ and
    so $\tau_\pi < T_\pi.$
    Thus, we conclude that $y \notin \wit(x).$
\end{proof}

\begin{proposition}\label{prop:H1Disc}
If {\bf H1} holds and 
$\F{\rho}(y_0) \cap \xfree \ne \emptyset$ for all $\rho > 0$,
then $\clr$ is discontinuous at $y_0.$
\end{proposition}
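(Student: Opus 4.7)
The plan is to exhibit the discontinuity by constructing a sequence $(x_n) \subset \xfree$ with $x_n \to y_0$ along which $\clr(x_n)$ stays bounded away from zero; since $\clr(y_0) = 0$, this immediately yields the desired conclusion.

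First, I would use the hypothesis $\F{1/n}(y_0) \cap \xfree \neq \emptyset$ to select $x_n$ in this intersection, together with admissible trajectories $\pi_n \in \Pi(y_0, x_n)$ of cost $c_{\pi_n} < 1/n$. Since $\psi$ is continuous and strictly positive and velocities are locally bounded by \textbf{(SH1)}, this forces $T_{\pi_n} \to 0$, and hence $x_n \to y_0$. Applying \Cref{prop:DirectPropagation} at the point $y_0$ (legitimate by \textbf{H1(a)}, with direction $\xi$ and displacement $r^\star$ from \textbf{H1}), I would obtain that $\|y^\star - \pi_n(t)\|$ is strictly decreasing along $\pi_n$ for $n$ large, yielding $\|y^\star - x_n\| < \|y^\star - y_0\| = r^\star$.

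Next, I would argue by contradiction: suppose $\clr(x_n) \to 0$ along a subsequence. For each such $n$ choose $y_n \in \wit(x_n)$ and an optimal $\sigma_n \in \Pi(x_n, y_n)$ with $c_{\sigma_n} = \clr(x_n) \to 0$. On the one hand, \Cref{lem:NoWitnessesInStarBall} forces $y_n \notin B_{r^\star}(y^\star)$, so $\|y^\star - y_n\| \ge r^\star$. On the other hand, \Cref{prop:HPosNhood} guarantees $h_F(x_n, y^\star - x_n) > 0$ for $n$ large, so a second application of \Cref{prop:DirectPropagation} (this time at $x_n$, with direction $y^\star - x_n$ and displacement $\|y^\star - x_n\|$, so that the associated displaced point is $y^\star$ itself) shows $\|y^\star - \sigma_n(t)\|$ is strictly decreasing along $\sigma_n$. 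Hence $\|y^\star - y_n\| < \|y^\star - x_n\| < r^\star$, contradicting the lower bound. Consequently a subsequence of $\clr(x_n)$ stays above some $\varepsilon > 0$, and $\clr$ is discontinuous at $y_0$.

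The main technical obstacle will be confirming that both $\pi_n$ and $\sigma_n$ lie within the time horizon $t^\star$ of \Cref{prop:DirectPropagation} uniformly in $n$. This should reduce to continuity of the constants appearing in \Cref{prop:HPosNhood,prop:DirectPropagation} with respect to the base point, combined with the vanishing of $T_{\pi_n}$ and $T_{\sigma_n}$ as $n \to \infty$, so it should be essentially routine bookkeeping rather than a genuine difficulty.
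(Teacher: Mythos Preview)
Your argument is correct and follows the same overall strategy as the paper: use {\bf H1(a)} to force points forward-reachable from $y_0$ into $B_{r^\star}(y^\star)$, then invoke \Cref{lem:NoWitnessesInStarBall} (from {\bf H1(b)}) to exclude witnesses there, and deduce a positive lower bound on clearance near $y_0$. The execution differs slightly. The paper takes a single admissible trajectory $\pi \in \Pi(y_0,\cdot)$ entering $\xfree$, uses \Cref{lem:UnifPenetration} to place $\pi(t)$ inside the strictly smaller ball $\overline{B_{\eta^\star r^\star}(y^\star)}$, and then appeals to compactness to get a uniform cost $\tilde\rho > 0$ for any trajectory exiting $B_{r^\star}(y^\star)$ from that inner ball; this yields $\liminf_{t\to 0^+}\clr(\pi(t)) \ge \tilde\rho$ directly. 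You instead pick a sequence $x_n \in \F{1/n}(y_0)\cap\xfree$, apply \Cref{prop:DirectPropagation} once to place $x_n$ in $B_{r^\star}(y^\star)$, and then a second time (at the moving base point $x_n$) on the optimal trajectory $\sigma_n$ to derive a contradiction with $y_n \notin B_{r^\star}(y^\star)$. Your route trades the paper's compactness step for a second invocation of directional propagation; the price is the uniformity-of-$t^\star$ bookkeeping you flag, which is indeed routine since the constants $R$, $M$, $K$ in \Cref{prop:HPosNhood,prop:DirectPropagation} can be taken uniformly over $x_n$ in a compact neighborhood of $y_0$. Either way works; the paper's version is marginally more direct in that it produces an explicit $\tilde\rho$ without contradiction.
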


\begin{proof}
    By assumption, there exists a maximally defined 
    admissible trajectory $\pi(\cdot) \in \Pi(y_0, \cdot)$
    with $\pi(t) \in \xfree$ for all $0 < t < t_0$ sufficiently
    small (i.e. $\pi(\cdot)$ propagates immediately 
    from $y_0$ into free space).
    By \Cref{lem:UnifPenetration}, we see that 
    $\pi(t) \in B_{r^\star}(y^\star)$ for $t > 0$ sufficiently
    small, and so it follows from 
    \Cref{lem:NoWitnessesInStarBall} that 
    $\wit(\pi(t)) \subset \big(B_{r^\star}(y^\star)\big)^c$
    for all such $t > 0.$
    Finally, note that the cost associated with any trajectory
    propagating from $\overline{B_{\eta^\star r}(y^\star)}$ to
    $(B_{r}(y^\star))^c$ is strictly positive. 
    Thus, compactness of $\overline{B_{\eta^\star r}(y^\star)}$ 
    and \Cref{lem:UnifPenetration,prop:ClearanceDistanceBound} imply
    \[
        \liminf_{t \searrow 0} \clr(\pi(t)) \ge \tilde{\rho} > 0,
            \qquad \text{for some $\tilde{\rho} > 0.$}
    \]
    This is sufficient to conclude discontinuity of $\clr$ at $y_0.$
\end{proof}

Now we are prepared to prove our sufficiency result for envelope generators.

\begin{theorem}\label{thm:EnvGenThm}
Suppose assumptions {\bf (H1)} and {\bf (H2)} hold. Additionally assume that
$\F{\rho}(y_0) \cap \xfree \ne \emptyset$ and 
$\R{\rho}(y_0) \cap \xfree \ne \emptyset,$
for all $\rho > 0.$ 
Then $y_0$ is an envelope generator.
\end{theorem}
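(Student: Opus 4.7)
The strategy is to locate a ``seed'' envelope point $x_0 \in E$ with $y_0 \in \wit(x_0),$ and then invoke \Cref{thm:EAlongTrajectories} to propagate envelope membership back along an optimal trajectory into $y_0.$ Concretely, given such $x_0$ and any optimal $\pi \in \Pi(x_0, y_0),$ \Cref{thm:EAlongTrajectories} gives $\pi(t) \in E$ for all $t \in [0, T_\pi),$ while \Cref{lem:ClrToWitness} yields $\clr(\pi(t)) = \clr(x_0) - c_{\pi(0,t)} \searrow 0$ as $t \nearrow T_\pi$ (writing $\pi(0,t) := \pi\big|_{[0,t]}$ as in the proof of \Cref{lem:ClrToWitness}). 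Choosing $t_n \nearrow T_\pi,$ the sequence $x_n := \pi(t_n) \in E$ satisfies $x_n \to y_0$ and $\rho_{min}(x_n) = \clr(x_n) \to 0$ by \Cref{rem:Envelope}(c), establishing that $y_0$ is an envelope generator per \Cref{def:EnvGen}.

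The substance of the proof is thus to construct the seed $x_0.$ The hypotheses provide two complementary ingredients. First, \Cref{prop:H1Disc} (using \textbf{(H1)} and forward reachability of $y_0$) gives that $\clr$ is discontinuous at $y_0;$ unpacking its proof produces a trajectory $\pi_0 \in \Pi(y_0,\cdot)$ propagating into $B_{r^\star}(y^\star) \cap \xfree$ and some $\tilde{\rho} > 0$ so that $\clr(\pi_0(t)) \ge \tilde{\rho}$ for small $t > 0.$ Second, reverse reachability produces sequences $w_n \in \R{1/n}(y_0) \cap \xfree$ with $\clr(w_n) \le 1/n$ and $w_n \to y_0$ (since $c_{\pi_n} \to 0$ along admissible $\pi_n \in \Pi(w_n, y_0)$ forces $T_{\pi_n} \to 0$ and hence $\|w_n - y_0\| \to 0$). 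Thus, for $r > 0$ small enough that \textbf{(H2)} applies, the connected set $B_r(y_0) \cap \xfree$ contains both points of arbitrarily small clearance (the $w_n$) and points of clearance bounded below by $\tilde{\rho}$ (the $\pi_0(t)$ for small $t$).

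Lower semicontinuity of $\clr$ then delivers nearby envelope points. For a generic threshold $c \in (0, \tilde{\rho}/2),$ the superlevel set $\{w \in B_r(y_0) \cap \xfree : \clr(w) > c\}$ is open and a nonempty proper subset of the connected set $B_r(y_0) \cap \xfree;$ its relative boundary must be nonempty and contains some $z$ with $\clr(z) < c$ that is a limit of points with $\clr > c$ (perturbing $c$ slightly if needed to avoid the edge case $\clr(z) = c$). Such $z$ is a discontinuity point of $\clr,$ hence $z \in E$ by \Cref{thm:DiscontinuousOnE}. To elevate this $z$ into a seed $x_0$ with $y_0 \in \wit(x_0),$ I would combine \Cref{lem:NoWitnessesInStarBall} and \textbf{H1(b)} to argue that witnesses of free-space points sufficiently close to $y_0$ must lie outside $B_{r^\star}(y^\star)$ and converge to $y_0,$ and then apply \Cref{prop:CostProperties}(c) along optimal trajectories together with a compactness and diagonalization argument to extract an envelope point whose optimal witness is $y_0$ itself.

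The main obstacle is precisely this last elevation step: producing an envelope point whose witness is \emph{exactly} $y_0$ (not merely some nearby cliff point) while simultaneously keeping its clearance small. The semicontinuity-plus-connectedness argument robustly produces envelope points near $y_0$ with clearance bounded by $\tilde{\rho}/2,$ but anchoring the witness to $y_0$ and driving the clearance to zero requires carefully exploiting the kinodynamic rigidity from \textbf{(H1)} (via the tools of \Cref{sec:PosHam}) together with the local connectivity \textbf{(H2)}. Once such a seed is constructed, \Cref{thm:EAlongTrajectories} completes the argument as outlined in the first paragraph.
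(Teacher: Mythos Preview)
Your overall strategy takes an unnecessary detour, and the part you do sketch has a real gap.

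First, the detour. The definition of envelope generator only asks for $x_n \in E$ with $x_n \to y_0$ and $\rho_{min}(x_n) \to 0$; it says nothing about $\wit(x_n)$. So if your middle two paragraphs actually produced, for every $r,\rho>0$, a point $z \in E \cap B_r(y_0)$ with $\clr(z) < \rho$, you would already be done. The ``elevation step'' you flag as the main obstacle---anchoring a witness exactly at $y_0$ so that \Cref{thm:EAlongTrajectories} can be invoked---is simply not needed, and you do not carry it out in any case.

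Second, the gap. In your connectedness argument, the relative boundary of the open set $\{\clr > c\}$ consists of points $z$ with $\clr(z) \le c$ (by lower semicontinuity) that are limits of points with $\clr > c$. When $\clr(z) = c$ exactly, this does \emph{not} exhibit a discontinuity. Your fix, ``perturbing $c$ slightly if needed,'' is not justified: nothing rules out that for a whole interval of thresholds $c$ the boundary consists entirely of points with $\clr = c$. This is precisely the technical point the hypotheses are designed to overcome.

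The paper closes this gap with a quantitative step you are missing. Using \textbf{(H1)} and the directional--propagation tools of \Cref{sec:PosHam}, it shows there exist $0 < \rho_2 < \rho_1$ and $\tilde r > 0$ such that $\F{\rho_1}(x) \setminus \F{\rho_2}(x) \subseteq B_{r^\star}(y^\star)$ for every $x \in B_{\tilde r}(y_0)$; combined with \Cref{lem:NoWitnessesInStarBall}, this forces every $x \in B_{\tilde r}(y_0) \cap \xfree$ to satisfy either $\clr(x) < \rho_2$ or $\clr(x) \ge \rho_1$. With this genuine \emph{gap} in clearance values, the connectedness hypothesis \textbf{(H2)} applied to $B_{\tilde r}(y_0) \cap \xfree$ yields a point on $\partial W_{\rho_1}$ approached both by points with $\clr < \rho_2$ and by points with $\clr \ge \rho_1$, hence a discontinuity and an element of $E$ by \Cref{thm:DiscontinuousOnE}. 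Since $\rho_1$ and $\tilde r$ can be made arbitrarily small, this directly proves $y_0$ is an envelope generator---no seed point, no witness anchoring, no appeal to \Cref{thm:EAlongTrajectories}.
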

\begin{proof}
    Let $r, \rho > 0$ arbitrary. We will show that there exists some $x \in B_r(y_0) \cap E$
    with $\rho_{min}(x) < \rho.$
    
    Using {\bf H1(a),} as in the proof of
    \Cref{lem:UnifPenetration}, 
    select $0 < R \le \min\{r,r_0\}$ sufficiently small 
    that $h_F(x,y^\star-x) > \frac{1}{2}h_F(y_0,y^\star - y_0)$
    for all $x \in \overline{B_R(y_0)}.$ 
    Now, define
    \[
        \psi^\star := \max \{\psi(x,v): x \in \overline{B_R(y_0)}, v \in F(x) \}
    \]
    and, using \Cref{lem:STLNR}, define
    \[
        \rho^\star := \min\left\{ \rho^\star(x) : x \in \overline{B_{R}(y_0)}\right\}.
    \]
    Select a value $0 < \rho_1 < \min\{\rho, \rho^\star\}$ sufficiently small that
    \[
        \B{\rho_1}(x) \subseteq B_{R/3}(x) \qquad 
        \text{for all $x \in \overline{B_{R}(y_0)}.$}
    \]
    We proceed with successive scalings
    employing \Cref{lem:STLNR}.
    For $i = 1,2,$ define
    \[
        r_i := \min\left\{r(\rho_i,x): x \in \overline{B_{R}(y_0)}\right\},
    \]
    where $\rho_{2}$ is chosen appropriately small to ensure
    \[
        \B{\rho_{2}}(x) \subseteq B_{r_{1}}(x) \qquad
        \text{for all $x \in \overline{B_{R}(y_0)}.$}
    \]
    As constructed, it holds that $\rho_1$ and $\rho_2$ 
    are both positive, with the property
    \[
        \B{\rho^\star}(x) \setminus \B{\rho_i}(x) \subset 
        \big( B_{r_i}(x)\big)^c \qquad 
        \text{for $i = 1,2$ and $x \in \overline{B_{R}(y_0)}.$}
    \]
    We also observe that $r_2 \le r_1 \le R/3.$\\
    
    \noindent{\bf Claim 1:} There exists $\tilde{r} > 0$ so that for all 
    $x \in B_{\tilde{r}}(y_0) \cap \xfree,$ it holds that either
    $\clr(x) < \rho_2$ or $\clr(x) \ge \rho_1.$
    
    \noindent Consider any radius $0 < r < R/3,$ any point 
    $x \in B_r(y_0),$ and a maximally defined admissible 
    trajectory $\pi(\cdot) \in \Pi(x,\cdot).$
    For all $t < \min\{T_\pi, \rho_1\},$ we have
    $\pi(t) \in B_{R}(y_0).$
    Thus, as in the proof of \Cref{lem:UnifPenetration}, 
    we compute 
    \[
        \frac{d}{dt}\|y^\star - \pi(t)\| \le - \ \frac{h_F(y_0,y^\star-y_0)}{r^\star},
    \]
    and so it follows that $\pi(t) \in B_{r^\star}(y^\star)$ 
    whenever 
    \[
        t > t^\star(r) := \frac{2r^\star\big((r+r^\star)^2 - (r^\star)^2\big)}{h_F(y_0,y^\star-y_0)}.
    \]
    We note that $t^\star(r) \to  0$ as $r \to 0^+,$
    so we fix $0 < \tilde{r} < R/3$ such that 
    $t^\star(\tilde{r}) \psi^\star < \rho_2.$ 
    
    Given $x \in B_{\tilde{r}}(y_0)$, consider $\pi \in \Pi(x,\cdot)$ with $c_\pi \ge \rho_2.$
    It follows that 
    \[
       T_\pi \psi^\star \ge \int_0^{T_\pi}\psi(\pi(t),\dot{\pi}(t)) dt = c_\pi \ge \rho_2.
    \]
    Thus, $T_\pi > t^\star(\tilde{r})$ and so $\pi(T_\pi) \in B_{r^\star}(y^\star).$
    We conclude that 
    \begin{equation}\label{eqn:IntoTheStarBall}
        \F{\rho_1}(x) \setminus \F{\rho_2}(x) \subseteq B_{r^\star}(y^\star)
        \qquad \text{for all $x \in B_{\tilde{r}}(y_0).$}
    \end{equation}
    The validity of {\bf Claim 1} now follows from 
    \Cref{lem:NoWitnessesInStarBall}.\\

    \noindent{\bf Claim 2:} $B_{\tilde{r}}(y_0) \cap \partial W_{\rho_1} \ne \emptyset.$
    
    \noindent First, observe that 
    \begin{equation}\label{eqn:WaveIn}
        B_{\tilde{r}}(y_0) \cap W_{\rho_1} \ne \emptyset,
    \end{equation}
    by assumption that 
    $\R{\rho}(y_0) \cap \xfree \ne \emptyset$
    for all $\rho > 0.$
    Meanwhile, by the forward accessibility assumption 
    ($\F{\rho}(y_0) \cap \xfree \ne \emptyset$ 
    for all $\rho > 0$) we select 
    $x \in \F{\rho_1}(y_0) \cap B_{\tilde{r}}(y_0) \cap \xfree.$
    By \Cref{lem:UnifPenetration}, it holds that
    $\F{\rho_1}(x) \subseteq B_{r^\star}(y^\star).$
    Thus, we conclude $x \in W_{\rho_1}^c$ by
    \Cref{lem:NoWitnessesInStarBall}.
    In particular, we have shown 
    \begin{equation}\label{eqn:WaveOut}
        B_{\tilde{r}}(y_0) \cap W_{\rho_1}^c \ne \emptyset.
    \end{equation}
    The validity of {\bf Claim 2} thus follows from {\bf H2,}
    along with \Cref{eqn:WaveIn} and \Cref{eqn:WaveOut}.\\
    
    To conclude the proof of the Theorem, 
    we use {\bf Claim 2} to fix a point 
    $x \in B_{\tilde{r}}(y_0) \cap \partial W_{\rho_1}$
    and corresponding sequences converging to $x,$ namely
    \[
        (u_n) \subset W_{\rho_1} \cap B_{\tilde{r}}(y_0) \qquad \text{and} \qquad
        (w_n) \subset \big(W_{\rho_1}\big)^c \cap B_{\tilde{r}}(y_0).
    \]
    From {\bf Claim 1,} we conclude that $\clr(u_n) < \rho_2$ for all $n,$
    while $\clr(w_n) \ge \rho_1 > \rho_2$ for all $n.$ 
    This proves that $\clr$ is discontinuous at $x$ and the 
    result follows from \Cref{thm:DiscontinuousOnE}.
\end{proof}

\begin{remark}
We note that in \Cref{prop:H1Disc,thm:EnvGenThm} above, if it happens that
there is an $r^\star > 0$ with $B_{r^\star}(y^\star) \subset \xfree,$
then we can remove the assumption regarding 
$\F{\rho}(y_0) \cap \xfree \ne \emptyset,$ which is thus a consequence
of \Cref{lem:UnifPenetration}.
\end{remark}
\begin{ex}
We conclude the paper with an additional collection of examples,
demonstrating possible applications of \Cref{thm:EnvGenThm}.
\begin{enumerate}
    \item {\bf (Isolated Obstacle Points)} 
    Suppose $y_0 \in \xobst$ is an isolated point and 
    there exists $\xi \in \xspace$ so that
    \[
        h_F(y_0,\xi) := \inf_{v\in F(y_0)} \langle v, \xi \rangle > 0.
    \]
    Then the fact that $y_0$ is an envelope generator follows 
    from either \Cref{thm:EnvGenThm} or
    \Cref{thm:PersistentBddry}. 
    In the former approach, we use the fact that $y_0$ is 
    isolated to select $r^\star>0$ small enough so that
    $B_{r^\star}(y^\star) \subset \xfree,$ then apply 
    \Cref{thm:EnvGenThm}.
    Meanwhile in the latter approach, we first apply
    \Cref{thm:PersistentBddry}, then note that 
    given $r, \rho >0$ sufficiently small, it holds that
    $W_\rho \cap B_{r}(y_0) = \R{\rho}(y_0).$
    
    \vspace{1em}
    
    \item {\bf (Dubin's Car: Sharp Corner)}
    Working in $\xspace = \mathbb{R}^2\times \mathbb{T},$ where 
    $\mathbb{T} = [-\pi, \pi]$ is the flat torus with end points
    identified and topology generated by the periodic metric
    \[d_{\mathbb{T}}(a,b) := \min\{|a-b|, 2\pi - |a-b|\}.\]
    We consider the control system
    \begin{equation}\label{eqn:DubinsSystem}
    \begin{cases} \dot{x}_1 = \cos(x_3) \\ 
        \dot{x}_2 = \sin(x_3) \\
        \dot{x}_3 = u \end{cases}
    \qquad \text{for} \qquad u \in \uspace := [-1,1],
    \end{equation}
    along with a simple physical sharp corner obstacle at every $x_3$--level,
    \begin{equation}\label{eqn:DubinsCorner}
        \xobst := (-\infty,0] \times (-\infty,0] \times \mathbb{T}
        \qquad \text{and} \qquad \xfree := \xspace \setminus \xobst.
    \end{equation}
    The following analysis works for any choice of positive, continuous 
    running cost $\psi > 0.$
    
    \medskip
    
    \noindent{\bf Claim:} The point $y_0(\theta) = (0,0,\theta)$ is an 
    envelope generator for \Cref{eqn:DubinsSystem}--\Cref{eqn:DubinsCorner}
    if and only if $\theta \in [-\pi/2,0] \cup [\pi/2, \pi].$\\
    
    \noindent First, observe that if $\theta \in (0,\pi/2),$ then 
    $y-F(y) := \{y-v: v\in F(y)\} \subset \big(\xobst\big)^\circ$ 
    for all $y \in \xobddry$ sufficiently close to $y_0.$ 
    Thus, the only trajectories in $\xspace$ that reach these 
    points $y$ are inadmissible, as they must pass through the 
    interior of the obstacle space. Since this holds for all
    $y \in \xobddry$ near $y_0,$ it follows that $y_0$ is 
    bounded away from arbitrarily small waves.
    Meanwhile, if $\theta \in (-\pi,-\pi/2),$ it holds that 
    $x+F(x) := \{x+v: v\in F(x)\} \subset \big(\xobst\big)^\circ$ 
    for all $x \in \xspace$ sufficiently close to $y_0.$
    So, $\clr$ is continuous in a neighborhood of $y_0$
    (\Cref{fig:DubinsQIII}).
    Therefore, we conclude that $y_0$ cannot be an envelope generator 
    whenever $\theta \in (-\pi,-\pi/2) \cap (0,\pi/2).$\\
    
\begin{figure}[tbhp]
\centering
\subfloat[$-\pi<\theta<-\frac{\pi}{2}$]{
\includegraphics[clip=true,trim=0.75in 1.75in 0.25in 2.5in,height=2.5in]{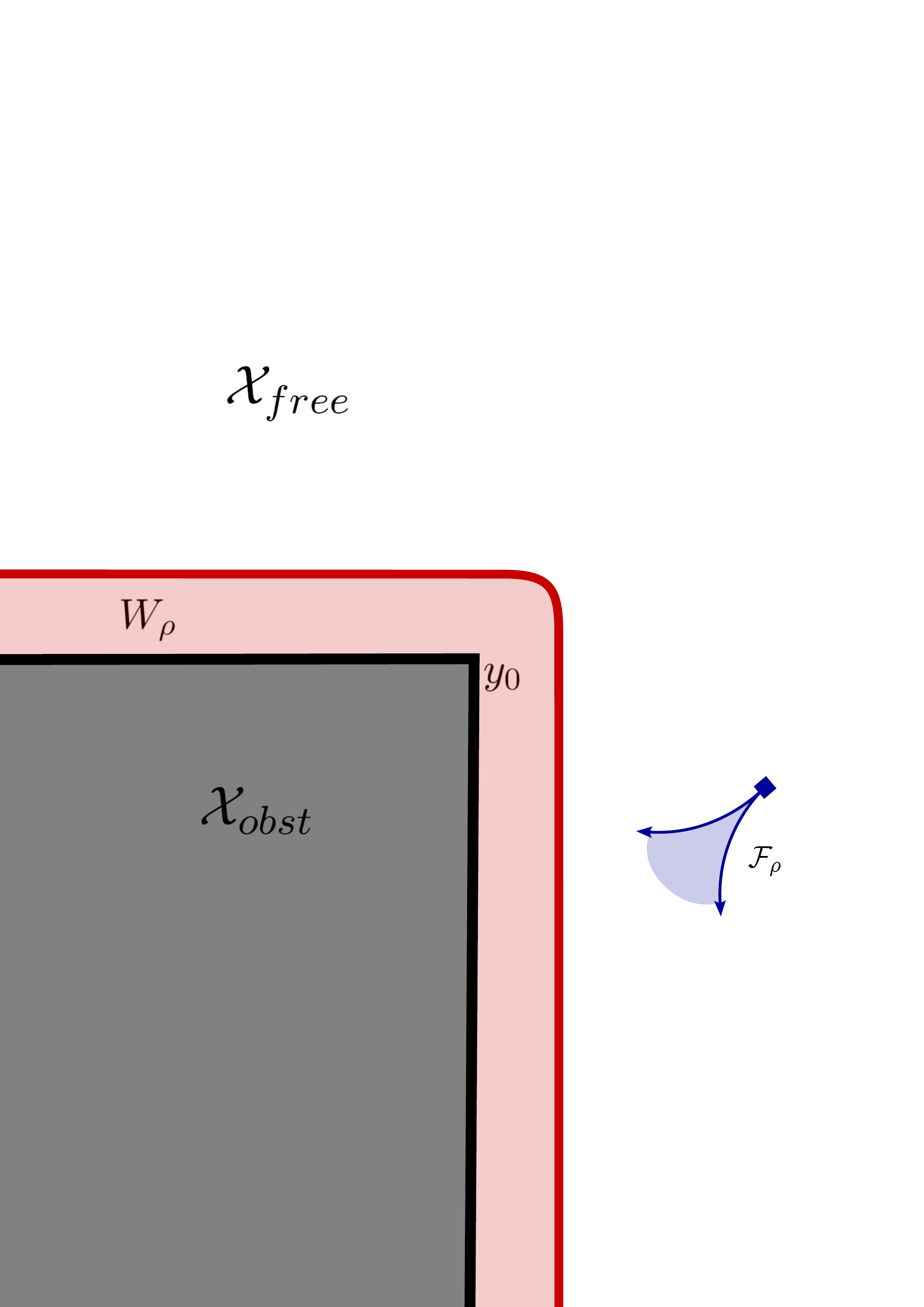}\label{fig:DubinsQIII}}
\hspace{.05cm}
\subfloat[$\frac{\pi}{2} \le \theta < \pi$]{
\includegraphics[clip=true,trim=0.75in 1.75in 0.25in 2.5in,height=2.5in]{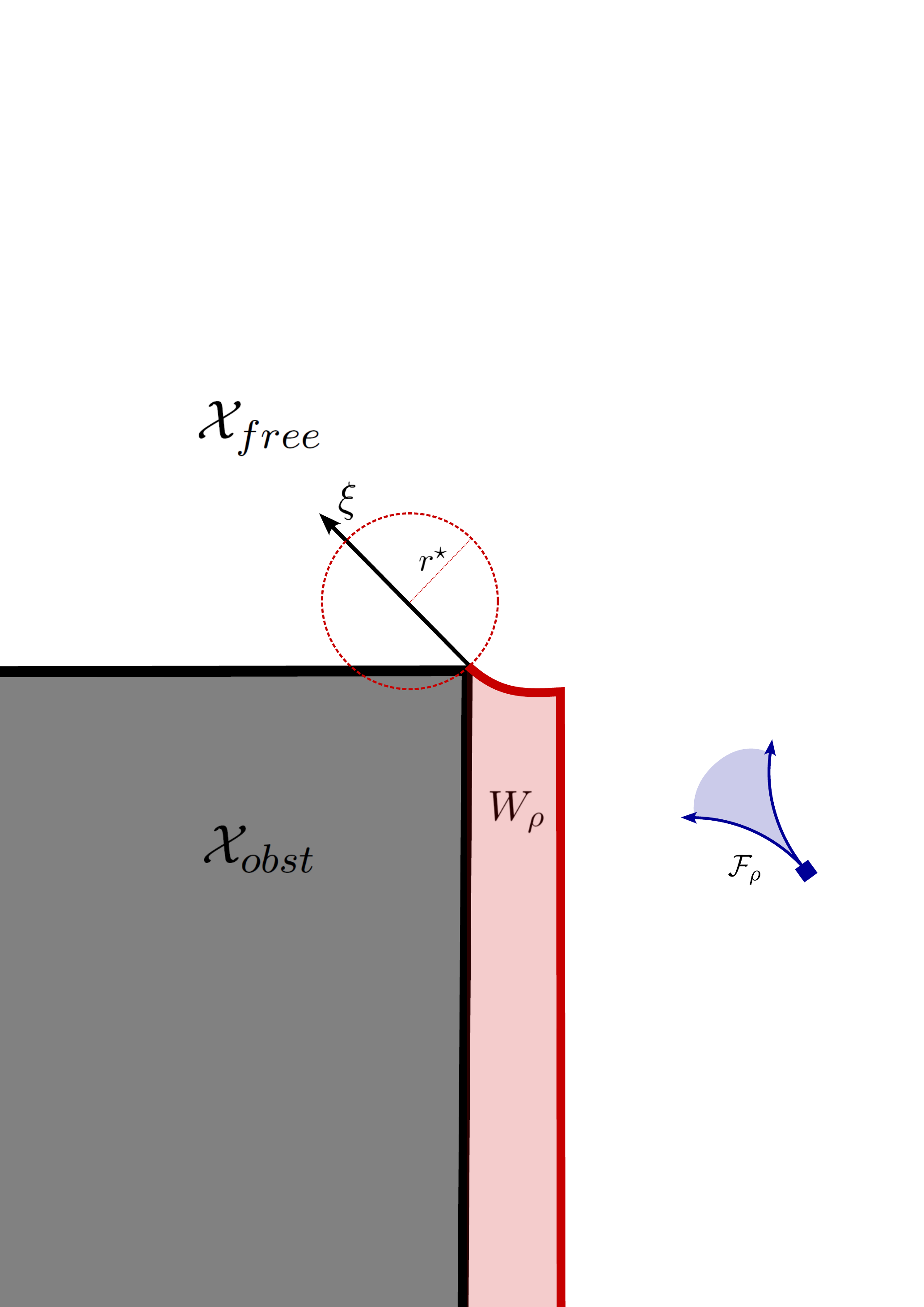}\label{fig:DubinsQII}}
\hspace{.05cm}
\subfloat[$-\frac{\pi}{2} < \theta \le 0$]{
\includegraphics[clip=true,trim=0.75in 1.75in 0.25in 2.5in,height=2.5in]{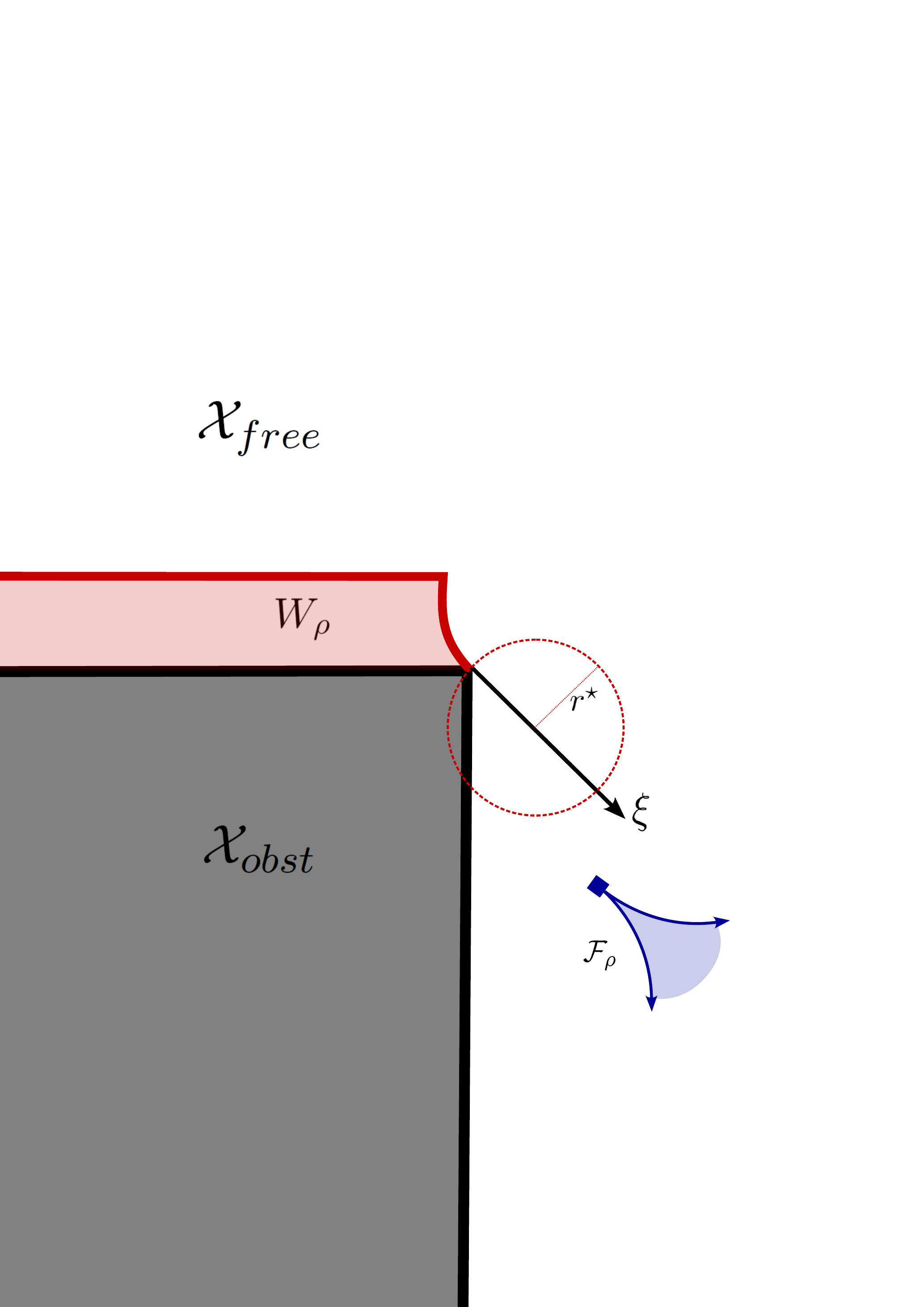}\label{fig:DubinsQIV}}
\caption{Dubin's car system, with sharp corner obstacle, and
initial orientation angles $\theta$ in third, 
second, and fourth quadrants, respectively. Example propagating 
waves present in each setting, along with a forward
reachable set for reference.}
\label{fig:DubinsExample}
\end{figure}
    
    Next, we fix $\theta \in [\pi/2, \pi)$ and choose the vector 
    \[
        \xi := (\cos(\theta), \sin(\theta), 0) \in \xspace.
    \]
    Since every $v \in F(y_0)$ has the form $v = (\cos(\theta), \sin(\theta), u),$
    it follows that $h_F(y_0, \xi) = 1 > 0.$
    Next, we fix $r^\star > 0$ sufficiently small so that
    \begin{equation}\label{eqn:y3Bound}
        y_3 < \pi \qquad \text{for all} \quad
        y = (y_1,y_2,y_3) \in B_{r^\star}(y^\star) 
            := B_{r^\star}\left(y_0 + r^\star \xi\right).
    \end{equation}
    It follows that every $y \in \xobddry \cap B_{r^\star}(y^\star)$
    must have the form (\Cref{fig:DubinsQII})
    \[
        y = (y_1,0,y_3) \qquad \text{for some $y_3 \in (0,\pi).$}
    \]
    Given any trajectory $\pi = (\pi_1,\pi_2,\pi_3) \in \Pi(\cdot,y)$
    propagating to such a point $y,$ it must hold that $\pi_3(t) \in (0,\pi)$
    and so $\dot{\pi}_2(t) = \sin(\pi_3(t)) > 0$ for all $t$ sufficiently
    close to $T_\pi.$ However, this means that $\pi_2(t) < 0$ and so
    $\pi(t) \in \big(\xobst\big)^\circ,$ for all such $t.$ 
    This proves that $\R{\rho}(y) \cap \xfree = \emptyset$
    for small $\rho > 0.$
    We conclude that assumption {\bf H1} holds at $y_0.$ 
    The remaining assumptions for \Cref{thm:EnvGenThm} 
    are straightforward to confirm, and so we conclude that 
    $y_0$ is an envelope generator for all $\theta \in [\pi/2, \pi).$
    
    Finally, we note that \Cref{thm:EnvGenThm} is not 
    directly applicable when $\theta = \pi;$ 
    owing to the fact that \Cref{eqn:y3Bound} is violated in 
    every $B_{r^\star}(y^\star)$ ball, even if one attempts to adjust 
    the choice of $\xi.$ 
    However, we can salvage the result by the fact
    that $y=(0,0,\pi)$ is a limit point of envelope generators. 
    Indeed, for any $r, \rho > 0$, we can select 
    $\theta' \in [\pi/2,\pi)$
    and $r' > 0$ so that $B_{r'}(y_0(\theta')) \subset B_r(y_0(\pi)).$
    It follows from $y_0(\theta')$ an envelope generator that 
    there exists $x \in E \cap B_{r'}(y_0(\theta))$ with 
    $\clr(x) < \rho.$ 
    Thus proving that $y_0(\pi)$ is itself an envelope generator.
    
    A similar argument proves the claim for $\theta \in [-\pi/2,0].$ 
    
    \vspace{1em}
    
    \item {\bf (System Admitting Horizontal Motion)}
    We conclude with an example demonstrating the fallacy of 
    the converse to \Cref{thm:EnvGenThm}.
    Working in $\xspace = \mathbb{R}^2,$ we consider the control system
   \begin{equation}\label{eqn:HorzSystem}
    \begin{cases} \dot{x}_1 = \cos(u) \\ 
        \dot{x}_2 = \sin(u) \end{cases}
    \qquad \text{for} \qquad u \in \uspace := [0,\pi],
    \end{equation}
    along with the simple sharp corner obstacle, 
    \begin{equation}\label{eqn:DubinsCornerAgain}
        \xobst := (-\infty,0] \times (-\infty,0] 
        \qquad \text{and} \qquad \xfree := \xspace \setminus \xobst,
    \end{equation}
    and $\psi \equiv 1.$
    
    For all $x = (x_1,x_2) \in \xfree,$ it is straightforward
    to confirm that
    \[
        \clr(x) = \begin{cases} x_1 & \text{if $x_2 \le 0$}\\
            \infty & \text{if $x_2 > 0.$} \end{cases}
    \]
    It follows that $y_0 = (0,0)$ is an envelope generator in
    this case, but we note that {\bf (H1)} fails to hold here.
\end{enumerate}
\end{ex}


\end{document}